\documentclass[12pt]{article}
\usepackage[amsmath]{e-jc}
\usepackage[T1]{fontenc}   % needed for đ in "Ranđelović"
\usepackage{graphicx}
\usepackage{subcaption}
\usepackage{booktabs}
\usepackage{array}
\usepackage{enumitem}

\theoremstyle{definition}
\newtheorem{invariant}{Invariant}

\theoremstyle{plain}
\newtheorem*{theoremA}{Theorem A}

\numberwithin{equation}{section}

\MSC{91A05, 05C57, 91A43, 91A46}

\Copyright{The author. Released under the CC BY-ND license (International 4.0).}

\title{The transversal achievement game\\ on a square grid}

\author{Kevin Guan}
\date{}
\authortext{}{Department of Mathematics, Princeton University, Princeton,
   New Jersey, USA (\email{kevinguan@princeton.edu}).}

\begin{document}
\maketitle

\begin{abstract}
In the transversal achievement game on the $n\times n$ board, two players alternately claim cells, and the first to own a transversal---a set of $n$ cells of which no two share a row or column---wins. Ranđelović showed that the first player wins for every $n\ge4$, while the game is a draw for $n=2,3$. We give an independent proof that the first player wins for $n\ge4$ that additionally establishes a bound on the length of the win: the given strategy forces a win by ply $2n+3$, i.e.\ on the first player's $(n+2)$-nd move, for every $n\ge4$. The proof yields a strategy that is fully determined by a fixed rule on the current position and can thus be implemented directly. We isolate the use of the hypothesis $n\ge4$ to two steps in the analysis, explaining why the argument fails at $n=3$. An exhaustive computational search implementing the strategy verifies it against every legal defense for $n=4,5,6$, confirming both the strategy's validity and that the $2n+3$ bound is attained in these cases. The main theorem has also been formalized and machine-checked in Lean 4.
\end{abstract}

%---------------------------------------------------------------
\section{Introduction}
\label{sec:intro}
%---------------------------------------------------------------

Two players alternately claim cells of an $n\times n$ grid, and the first to own a \emph{transversal}---a set of $n$ cells with no two cells sharing a row or a column--- wins. If the board fills with neither player having done so, the game is a draw. This is the \textbf{transversal achievement game}, introduced and posed as an open problem by Erickson~\cite{erickson}. 
\begin{figure}[ht]
\centering
\begin{subfigure}[b]{0.2829\textwidth}
\centering
\includegraphics[width=\textwidth]{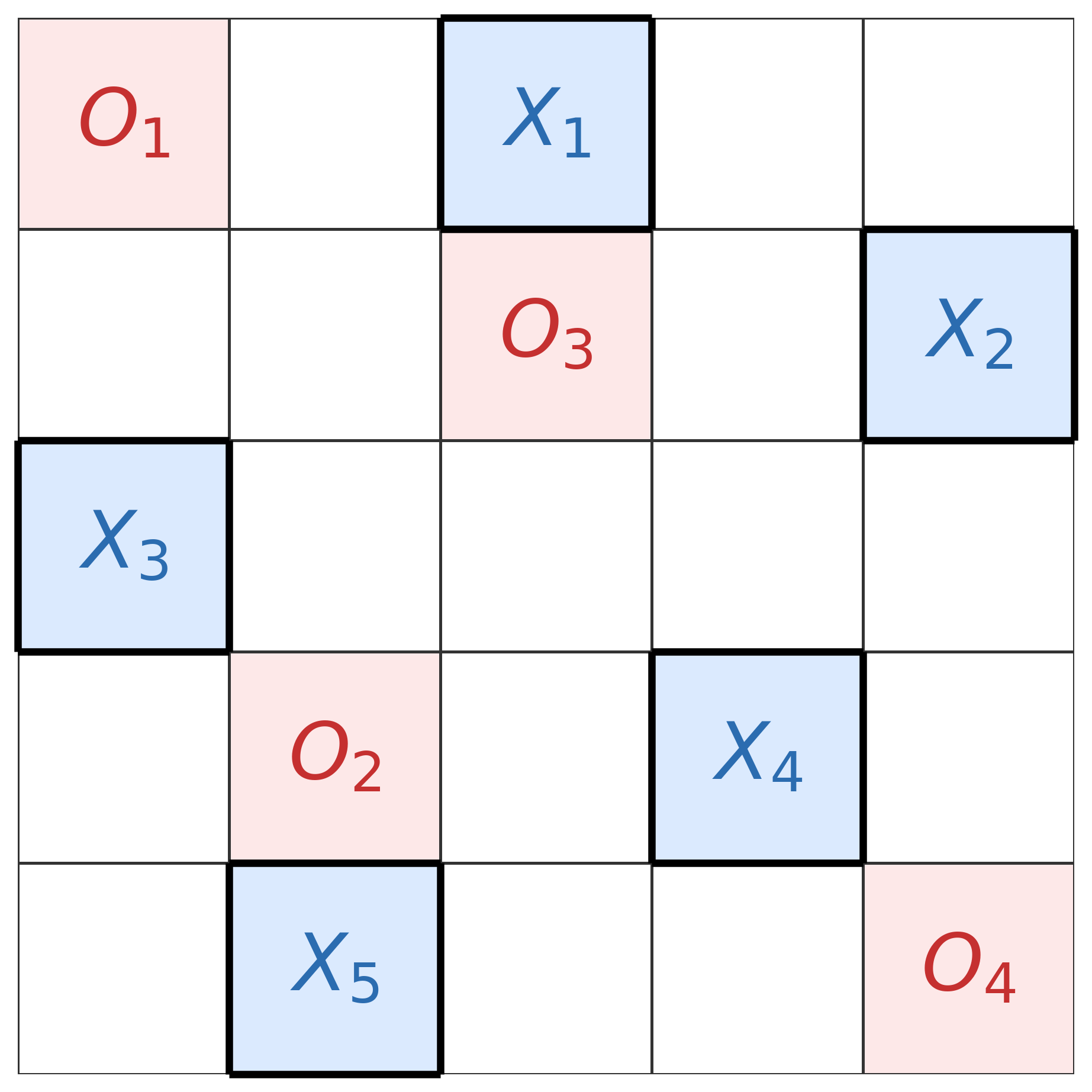}
\caption{Player 1 (X) win}
\label{fig:example-x-win}
\end{subfigure}
\hfill
\begin{subfigure}[b]{0.2829\textwidth}
\centering
\includegraphics[width=\textwidth]{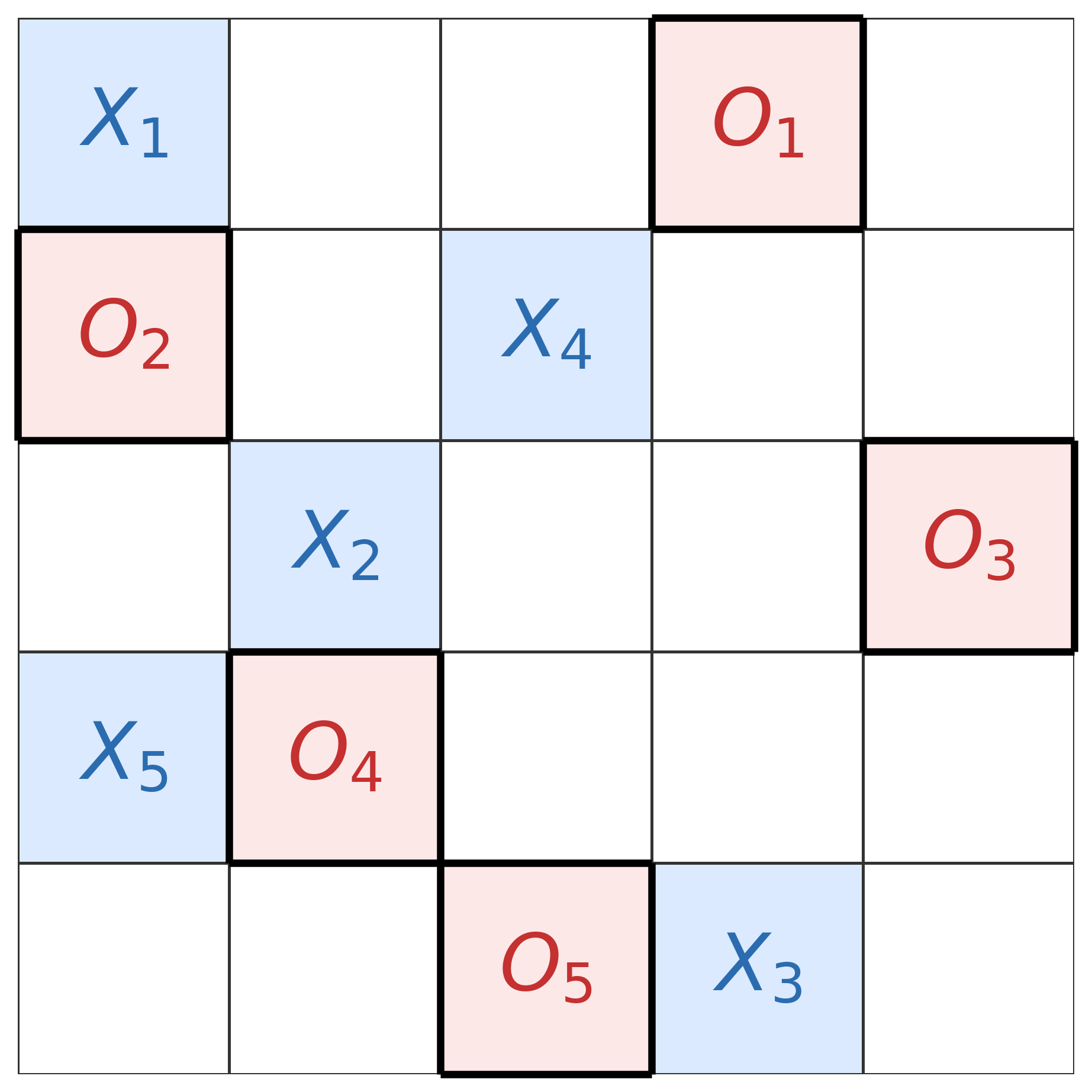}
\caption{Player 2 (O) win}
\label{fig:example-o-win}
\end{subfigure}
\hfill
\begin{subfigure}[b]{0.2829\textwidth}
\centering
\includegraphics[width=\textwidth]{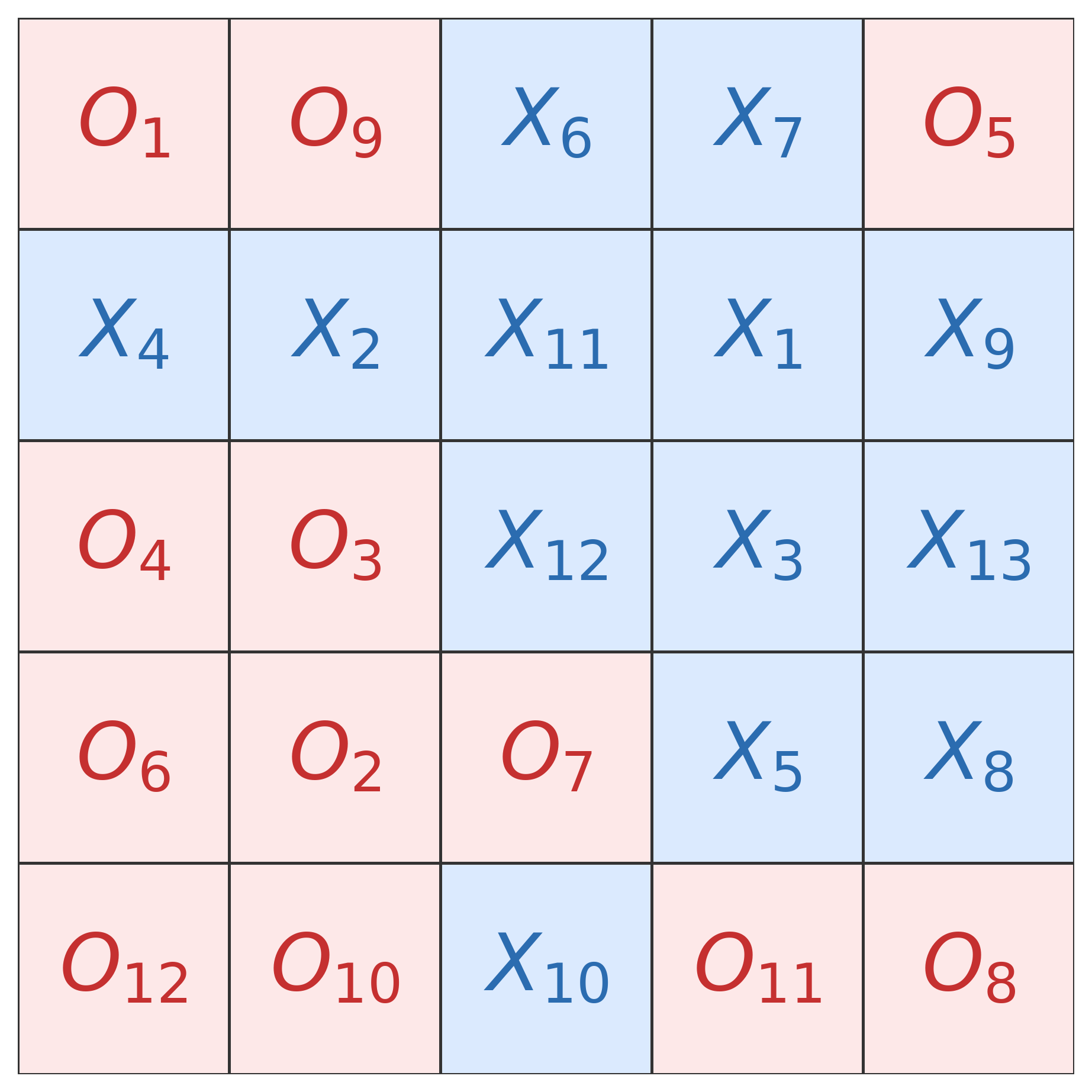}
\caption{Draw}
\label{fig:example-draw}
\end{subfigure}
\caption{Examples of the three possible outcomes on a $5\times5$ board.}
\label{fig:examples}
\end{figure}
It is a positional game in the sense of Beck~\cite{beck}: the board is the set of $n^2$ cells and the winning family is the set of $n!$ transversals, equivalently the perfect matchings of $K_{n,n}$. Unlike tic-tac-toe--style games, the winning sets are enormous in number and heavily overlapping. The Erd\H{o}s--Selfridge criterion~\cite{erdos-selfridge} gives no useful bound here, since $\sum_{T}2^{-|T|}=n!\,2^{-n}$ is astronomically larger than $\tfrac12$; pairing strategies are unavailable for the same reason.

For $n=1$, the first player wins trivially, and for $n=2$, the game is a draw; the standard strategy-stealing argument (as in Hales--Jewett~\cite{hales-jewett}) shows that the second player never wins, so for each $n$ the value is either \emph{first-player win} or \emph{draw}. The $3\times3$ game is similarly a draw, and Ranđelović~\cite{randjelovic} proved that the first player wins for every $n\ge4$, thereby answering Erickson's question. That paper also proves the $n=3$ draw by hand and goes on to study how small a sub-family of transversals still suffices for a first-player win.

The present work gives an independent proof that the first player wins for $n\ge4$, with several features that distinguish it from the argument of~\cite{randjelovic}:
\begin{enumerate}[leftmargin=2em]
\item \textbf{A bound on the length of the win.} We show that the strategy we give wins by ply $2n+3$---with the first player's $(n+2)$-nd stone---for every $n\ge4$, and the search reported in \S\ref{sec:verification} finds that the bound is attained by this strategy at $n=4,5,6$. 

\item \textbf{A fixed frame with matching theory.} The proof of~\cite{randjelovic} keeps its position in normal form by permuting rows and columns after almost every move. Instead of moving the board, we maintain an invariant---the \emph{open block} $H=U_R\times U_C$ contains no opposing stone (\S\ref{sec:strategy}). The two lemmas of \S\ref{sec:lemmas} then accomplish what the case analysis accomplishes in~\cite{randjelovic}: Lemma~\ref{lem:threat} identifies the cells completing a set $S$ with $\nu(S)=n-1$ as a combinatorial rectangle $D_R\times D_C$, and Lemma~\ref{lem:rectangle} describes how that rectangle grows when a stone is added to row $b$ or column $d$.

\item \textbf{One meaningful case split.} Our analysis only branches on the parameter $w=|F\cap(\text{row }b\cup\text{col }d)|$, the number of the opponent's early stones lying in the critical line pair, giving three cases (\S\ref{sec:proof}). By comparison, the argument of~\cite{randjelovic} handles $n=4$ separately from $n\geq 5$ and proceeds through five terminal cases, with a three-way split in the inductive construction preceding them.

\item \textbf{Explanation of the threshold.} We isolate the use of $n\ge4$ to exactly two places, named in \S\ref{sec:proof}. Everything else survives at $n=3$, which identifies why the argument begins to work at $n=4$.

\item \textbf{Machine-checked formalization.} The main theorem, $n=3$
draw, and supporting lemmas are formalized in Lean~4 and checked by the
Lean kernel (Appendix~\ref{app:lean}).
\end{enumerate}
Another more practical difference is that \S\ref{sec:strategy} specifies the strategy completely and separately from the verification in \S\ref{sec:proof}. It can therefore be implemented and run without reference to the proof, which makes the exhaustive checks in \S\ref{sec:verification}, of the strategy against every defense for $n=4,5,6$, meaningful as a corroboration.

\medskip

\noindent\textbf{Organization.} \S\ref{sec:prelim} fixes the game, the identification of the board with $K_{n,n}$, and the language of threats and completing cells; it states Theorem~A and disposes of the cases $n\le3$. \S\ref{sec:lemmas} proves two structural lemmas on which the main proof rests. \S\ref{sec:strategy} specifies X's strategy, and \S\ref{sec:proof} proves that X wins by ply $2n+3$. \S\ref{sec:verification} reports the exhaustive computational
verification for $n=4,5,6$, and \S\ref{sec:discussion} closes with
open problems. Appendices~\ref{app:implementation}--\ref{app:playthrough}
give implementation details, search statistics, a Lean~4 formalization of
the main theorem, and a worked $6\times6$ playthrough.

%---------------------------------------------------------------
\section{Preliminaries and Main Result}
\label{sec:prelim}
%---------------------------------------------------------------

We write the first player as \textbf{X} and the second as \textbf{O}.

\medskip

\noindent\textbf{The game.} Two players alternately claim cells of an $n\times n$ grid; Player 1 (X) moves first. A player wins upon owning $n$ cells of which no two share a row or a column (a \emph{transversal}, equivalently a perfect matching of $K_{n,n}$). If neither player ever does, the game is a draw. X's $k$-th stone is placed at ply $2k-1$, and O's $k$-th stone is placed at ply $2k$.

\medskip

\noindent\textbf{The board as $K_{n,n}$.} We denote the rows of the board as $R$ and the columns as $C$, with $|R|=|C|=n$. Cells are pairs $(r,c)$. We identify the board with the complete bipartite graph $K_{n,n}$ whose
vertex classes are the rows $R$ and columns $C$. Under this identification,
a cell $(r,c)$ is the edge joining row $r$ to column
$c$. Thus a matching of cells is
exactly a graph matching, i.e., a collection of edges of which no two share a
row vertex or a column vertex. In particular, a transversal is a perfect
matching of this bipartite graph.

\medskip

\noindent\textbf{Matchings, threats, and completing cells.} For a set $S$ of cells, $\nu(S)$ is the maximum size of a matching inside $S$, i.e., a set with no two cells sharing a row or column. A player wins exactly when their set $S$ satisfies $\nu(S)=n$, so ``$S$ contains a transversal'' reads $\nu(S)=n$. We say a cell $f$ \textbf{completes} $S$ if $\nu(S\cup\{f\})=n$, and that a player \textbf{threatens} $f$ if $f$ is unoccupied and completes their set. Note that a player may have $\nu(S)=n-1$ and still threaten nothing, because every completing cell is already occupied. This situation occurs in case (C-iii) of the proof of Lemma~\ref{lem:nodeviation}.

\begin{theoremA}
Player 1 wins for $n=1$ and for every $n\ge 4$; $n=2,3$ are draws.
\end{theoremA}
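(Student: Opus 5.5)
The plan splits Theorem~A into the small cases $n\le 3$ and the main case $n\ge4$. The small cases go first.

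For $n=1$, X's first stone is already a transversal. For $n=2$ there are exactly two transversals, the two diagonals, and they are disjoint. After X's first stone, O claims the other cell of the diagonal X has entered. The other diagonal is then split between X's and O's next stones, or O simply answers every X stone by taking the diagonal partner. Strategy stealing rules out an O win, so the value is a draw.

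For $n=3$, I give O an explicit drawing strategy that keeps $\nu(X)\le 2$. By symmetry X opens at $(1,1)$, and O replies at $(2,2)$. Up to row/column permutations fixing this position, few X second moves remain. For each, O blocks the unique completing rectangle, which is a small set by the threat structure. O continues until the board has nine cells and X holds only five. Because the case tree is small, I would also cite the exhaustive check and the Lean formalization for it. Strategy stealing again excludes an O win.

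For $n\ge4$, X's strategy is to build a large matching while keeping an \emph{open block} $H=U_R\times U_C$ of unused rows and columns free of O stones. Each X move uses a fresh row $r\in U_R$ and a fresh column $c\in U_C$ and places a stone at $(r,c)$. X must then choose the new $U_R,U_C$ so that O's last stone lies outside $H$; this is possible because O's stone can occupy at most one row and one column of $H$, and X can discard those. In this way X reaches $\nu(X)=n-1$ within about $n$ moves, ending with a single remaining row $b$ and column $d$. The completing cells then form a rectangle $D_R\times D_C$ by the threat lemma. O can occupy at most one cell per move, so X needs the rectangle to have at least two free cells, or to be enlargeable. The rectangle lemma says that adding a stone in row $b$ or column $d$ enlarges $D_C$ or $D_R$, so X can create a double threat, and O cannot block both. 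The total count is $n-1$ building moves plus at most three finishing moves, which gives the bound $2n+3$.

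The main obstacle is the endgame: O's early stones placed in row $b$ and column $d$ could kill every completing cell. I would split on $w=|F\cap(\text{row }b\cup\text{col }d)|$. When $w$ is small, X uses the rectangle lemma directly to get two disjoint threats. When $w$ is large, O has spent moves in the critical lines, so O has too few stones elsewhere. X then swaps which line it reserves, which makes $D_R$ or $D_C$ have size at least $3$ while O's blocking stones number at most $2$. The rectangle sizes must exceed the number of O blockers, and only for $n\ge4$ is there room for that; this is why the case $n=3$ fails.
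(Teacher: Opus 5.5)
The essential gap is in the endgame for $n\ge4$. You treat O's replies only as blocks (``O can occupy at most one cell per move''), but every forced block is also an O-stone. O already holds $n-2$ stones at ply $2n-3$. After blocking $(b,d)$ and one more cell, O holds $n$ stones and may have $\nu=n-1$ with a free completing cell. In that case X's double-threat move at ply $2n+1$ loses, because O completes first. This happens even for small $w$. For $n=4$, let the game go X$(1,1)$, O$(4,1)$, X$(2,2)$, O$(2,3)$, X$(3,3)$, which respects your open-block rule. Then $b=d=4$, $\sigma=\mathrm{id}$, $w=1$, and rows $2,3$ are live. Suppose X attacks through row $b$: O blocks $(4,4)$, X plays $(4,3)$, and O is forced to $(3,4)$. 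O now owns the matching $\{(4,1),(2,3),(3,4)\}$, whose completing cell $(1,2)$ is free, so if X plays $(2,4)$ for the double threat, O wins at ply $10$. Attacking through column $d$ instead ($(2,4)$, then $(4,3)$) wins at ply $11$. The missing idea is Rule~\ref{rule:plan} together with Lemma~\ref{lem:nodeviation}. The direction must be chosen so that both forced blocks lie on a line $L$ that already carries a stone of $F$: plan (i) if $F$ meets column $d$, plan (ii) if $F$ meets row $b$. Then $\nu(O)\le\nu(F\setminus L)+1\le n-2$ by~\eqref{eq:line}. In the case $w=0$, $\nu(F)=n-2$, the block at $(b,d)$ \emph{does} give O an $(n-1)$-matching, and Lemma~\ref{lem:structF} is needed to see that its unique completing cell is X's last Phase-1 stone $(u_a,v_c)$.

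Your treatment of large $w$ is not yet an argument either. The double threat needs two live rows and a free cross cell, and stones of $F$ in row $b\cup$ column $d$ kill live rows. With a careless last Phase-1 move, all $n-2$ stones of $F$ can sit in row $b$ (O plays $(b,c)$ in each column $c$ just claimed by X), leaving only one live row. Enlarging $D_R$ or $D_C$ to size $3$ does not rescue this. Further X-stones in row $b$ add completing cells in column $d$ one at a time, each blocked as it appears; a double threat needs one stone in row $b$ and one in column $d$. If ``swapping which line X reserves'' means choosing X's $(n-1)$-st stone differently, you must prove that a good choice always exists. This is Lemma~\ref{lem:tiebreak}: intersecting the conditions $F\subseteq\text{row }b\cup\text{col }d$ for two admissible outcomes of the final $2\times2$ block forces $F\subseteq H$, contradicting $|F|=n-2\ge2$. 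Lemma~\ref{lem:pair} then counts ordered live pairs against the $n-2-w$ O-stones outside row $b\cup$ column $d$ to get a free cross cell. These steps, together with $w'=1\le n-3$ in Lemma~\ref{lem:structF}, are where $n\ge4$ is actually used, not a comparison of rectangle sizes with the number of blockers.

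Finally, your $n=3$ sketch is not a proof. Blocking the completing rectangle fails once it has two free cells, so O must prevent double threats in advance. The paper's exhaustive search also covers only $n=4,5,6$, so it does not support this case. Cite the hand proof in~\cite{randjelovic} (or the Lean development), as the paper does.
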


\medskip
Sections~\ref{sec:lemmas}--\ref{sec:proof} prove the case $n\ge4$ with the strategy of \S\ref{sec:strategy}: X wins by ply $2n+3$, i.e.\ with their $(n+2)$-nd stone. The cases $n<4$ are settled below and by previous work: $n=1$ is trivial, and $n=2$ is settled by hand. We do not reprove that $n=3$ is a draw~\cite{randjelovic}, though it is included in the Lean development of Appendix~\ref{app:lean}. We show through an exhaustive search that the $2n+3$ upper bound is attained at $n=4,5,6$ (\S\ref{sec:verification}).

\medskip
\noindent\textbf{Player 2 never wins (any $n$).} The winning family is monotone, as a superset of a transversal contains a transversal, and the board is finite, so a strategy-stealing argument applies. Assume O has a winning strategy $\Sigma$. X could adopt $\Sigma$ after an arbitrary first move, treating their extra stone as unplayed and playing an arbitrary cell when $\Sigma$ names an occupied one. Compare the real game with the imagined game obtained by deleting X's extra stones: O's set is identical in both, and X's is a superset, so if X wins the imagined game at ply $t$, then in the real game, O has not completed a transversal before ply $t$ and X completes one no later. This yields a win for X, contradicting the fact that both cannot win. Hence, for every $n$, the game value is \emph{X win} or \emph{draw}; ruling out an X win at $n=2,3$ is what makes those draws.

\medskip
\noindent\textbf{The small cases $n\le3$.} For $n=1$ the first player trivially wins. For $n=2$ the two winning sets are the diagonals $A=\{(1,1),(2,2)\}$ and $B=\{(1,2),(2,1)\}$. O can draw with the strategy of answering X's first stone with its partner in the same diagonal. That diagonal is then split one--one, and the remaining two cells are exactly the other diagonal, which X and O take one each. Neither owns a diagonal, so O's strategy prevents an X win. Since O cannot win by the strategy-stealing remark, $n=2$ is a draw. The $3\times3$ game is likewise a draw, proved by hand in~\cite{randjelovic}. The rest of the paper treats $n\ge4$.

%---------------------------------------------------------------
\section{Structural Lemmas}
\label{sec:lemmas}
%---------------------------------------------------------------

\begin{lemma}[threat structure]
\label{lem:threat}
Let $\nu(S)=n-1$. Let $D_R$ be the set of rows exposed by some maximum matching of $S$, and $D_C$ the set of columns exposed by some maximum matching. Then
\[
\{f:\ f \text{ completes } S\}=D_R\times D_C .
\]
Additionally, if $\nu(S)\le n-2$, no single cell completes $S$.
\end{lemma}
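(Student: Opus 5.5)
We prove the two inclusions separately and dispose of the final sentence first, since it is immediate. Adding one cell $f$ to $S$ raises the maximum matching size by at most one: a matching in $S\cup\{f\}$ either avoids $f$, and so lies in $S$, or uses $f$, and deleting $f$ leaves a matching in $S$. Hence $\nu(S\cup\{f\})\le\nu(S)+1$. If $\nu(S)\le n-2$, this gives $\nu(S\cup\{f\})\le n-1<n$, so no single cell completes $S$.

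Now let $\nu(S)=n-1$.

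\emph{Completing cells lie in $D_R\times D_C$.} Suppose $f=(r,c)$ completes $S$, and let $P$ be a perfect matching of $S\cup\{f\}$. Since $\nu(S)=n-1<n$, $P$ must use $f$. Then $P\setminus\{f\}$ is a matching of $S$ of size $n-1$, hence a maximum matching. It covers every row except $r$ and every column except $c$. So $r$ is exposed by a maximum matching, giving $r\in D_R$, and likewise $c\in D_C$.

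\emph{Every cell of $D_R\times D_C$ completes $S$.} This is the substantive direction. Take $r\in D_R$ and $c\in D_C$. By definition there are maximum matchings $M_1$ exposing $r$ and $M_2$ exposing $c$, but a priori no single maximum matching exposes both, so the two must be combined. Since $|M_i|=n-1$, $M_1$ exposes exactly one row (namely $r$) and one column, say $c'$; $M_2$ exposes exactly one row $r'$ and the column $c$.
\begin{itemize}
\item If $c'=c$, then $M_1\cup\{(r,c)\}$ is a perfect matching, and we are done.
\item If $r'=r$, the same argument applies with $M_2$.
\item Otherwise $r\ne r'$ and $c\ne c'$. Consider the symmetric difference $M_1\triangle M_2$, a disjoint union of alternating paths and even cycles.
\end{itemize}
In the last case, vertex $r$ is covered by $M_2$ but not by $M_1$, so $r$ has degree one in $M_1\triangle M_2$ and is an endpoint of a path. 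The same holds for $c$ (covered by $M1$, not $M_2$), and for $r'$ and $c'$, which are each exposed by one matching and covered by the other. So $r,c,r',c'$ are exactly the four path endpoints.

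Let $Q$ be the path starting at $r$; its first edge lies in $M_2$. We split on the other endpoint of $Q$.
\begin{itemize}
\item \emph{$Q$ ends at $c'$.} Its last edge is in $M_2$ (since $c'$ is exposed by $M_1$), so $Q$ has odd length and is $M_1$-augmenting. Then $M_1\triangle Q$ is a matching in $S$ of size $n$, contradicting $\nu(S)=n-1$.
\item \emph{$Q$ ends at $r'$.} It has even length, alternating $M_2,M_1,\dots$ and ending with an $M_1$-edge at $r'$. Then $M_1\triangle Q$ is a maximum matching of $S$ exposing $r'$ instead of $r$, and still exposing $c'$.
\item \emph{$Q$ ends at $c$.} Then $M_1\triangle Q$ is a maximum matching that exposes both $r$ and $c$, and adding $(r,c)$ gives a perfect matching of $S\cup\{f\}$.
\end{itemize}

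The second outcome is the main obstacle, and I would handle it by choosing the path more cleverly rather than starting only at $r$. By symmetry, start instead from $c$ (exposed by $M_2$); its path ends at $r$, $r'$, or $c'$. Count parities: an $r$–$c$ path is what we want, and any $r$–$c'$ or $r'$–$c$ path is augmenting for $M_1$ or $M_2$, which is impossible. The endpoints must therefore pair up as either $\{r,c\}$ with $\{r',c'\}$, or $\{r,r'\}$ with $\{c,c'\}$.

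It remains to exclude $\{r,r'\},\{c,c'\}$. I would first try this directly: in that configuration, flipping both paths should produce a matching that exposes both $r$ and $c$. Should that fail, the fallback is the Dulmage–Mendelsohn / Gallai–Edmonds description of $D_R$ and $D_C$ via vertices reachable by even alternating paths from exposed vertices, using the König-type fact that in a graph with deficiency one, the reachable sets $D_R$ and $D_C$ can be exposed simultaneously. This step is where the real content lies.
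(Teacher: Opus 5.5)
Your proof of the final clause and of the inclusion $\subseteq$ is correct and is the paper's argument. The inclusion $\supseteq$, however, is not completed, and the plan you sketch for completing it cannot succeed.

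In the case $r\ne r'$, $c\ne c'$, parity already fixes how the four endpoints pair up. The path $Q$ from $r$ starts with an $M_2$-edge. If it ended at $c$, its last edge would lie in $M_1$ (because $c$ is $M_2$-exposed), so $Q$ would have even length. That is impossible for a path from a row vertex to a column vertex in a bipartite graph. If it ended at $c'$, it would be $M_1$-augmenting. So $Q$ always ends at $r'$. Your third bullet is therefore vacuous, and the pairing $\{r,r'\},\{c,c'\}$ that you propose to exclude is the only one that ever occurs. Flipping both paths does not rescue this: in $M_1$ it produces a matching exposing $r'$ and $c$, and in $M_2$ one exposing $r$ and $c'$. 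The Gallai--Edmonds fallback, as phrased, simply asserts the statement to be proved.

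The missing step is to flip the single path $Q$ inside $M_2$ rather than $M_1$. Since $Q$ runs from $r$ (first edge in $M_2$) to $r'$ (last edge in $M_1$), it has equally many edges of each matching. So $M_2\triangle Q$ is again a matching of size $n-1$ in $S$; it exposes $r$, whose only $M_2$-edge has been removed, and covers $r'$. Moreover $c\notin V(Q)$: interior vertices of $Q$ are covered by both matchings while $c$ is $M_2$-exposed, and both endpoints of $Q$ are rows. Hence $M_2\triangle Q$ still exposes $c$, and adjoining $(r,c)$ gives a perfect matching of $S\cup\{(r,c)\}$. (Equivalently, flip only the $c$--$c'$ path inside $M_1$.) This is exactly the paper's proof. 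Your second bullet performed the flip in the wrong matching, which merely trades the exposed row $r$ for $r'$.
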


\begin{proof}
Since $\nu(S)=n-1$, every maximum matching exposes exactly one row and one column.

$(\subseteq)$ If $\nu(S\cup\{(p,q)\})=n$, a perfect matching of $S\cup\{(p,q)\}$ must use $(p,q)$, as otherwise $\nu(S)=n$. Deleting it leaves a maximum matching of $S$ exposing $p$ and $q$, so $p\in D_R,\ q\in D_C$.

$(\supseteq)$ Let $M_1$ be a maximum matching exposing row $p$ (and some column $q_1$), and $M_2$ a maximum matching exposing column $q$ (and some row $p_2$). We construct a maximum matching that exposes $p$ and $q$ simultaneously, as adjoining $(p,q)$ then gives a perfect matching. If $p_2=p$ take $M_2$; if $q_1=q$ take $M_1$. Otherwise consider the component $P$ of $M_1\triangle M_2$ containing $p$. As $p$ is $M_1$-exposed and $M_2$-covered, it has degree $1$ in $M_1\triangle M_2$, so $P$ is a path starting at $p$ with an $M_2$-edge. Its other endpoint $z$ also has degree $1$. If the last edge of $P$ were an $M_2$-edge, $z$ would be $M_1$-exposed and $P$ would be an $M_1$-augmenting path, contradicting maximality of $M_1$; so the last edge lies in $M_1$ and $z$ is $M_2$-exposed. The path alternates sides of the bipartition and starts at the row vertex $p$ with an $M_2$-edge and ends with an $M_1$-edge, so it has evenly many edges and $z$ is a row vertex; the only $M_2$-exposed row vertex is $p_2$, so $z=p_2$. Then $|P\cap M_1|=|P\cap M_2|$, so $M_2\triangle P$ is again a maximum matching, which exposes $p$, whose only incident $M_2$-edge was removed, and covers $p_2$. Finally, $q\notin V(P)$: internal vertices of $P$ have degree $2$ in $M_1\triangle M_2$ and are thus covered by both matchings, while $q$ is $M_2$-exposed, and the endpoints are $p\neq q$ and $p_2\ne q$, as both are row vertices. Therefore, $M_2\triangle P$ still exposes $q$.
\end{proof}

\noindent\emph{Two remarks on the statement.} First, $D_R\times D_C$ is automatically disjoint from $S$: if $(p,q)\in S$ with $p\in D_R$, $q\in D_C$, the matching produced above together with $(p,q)$ would be a perfect matching \emph{inside} $S$, contradicting $\nu(S)=n-1$. So ``completes'' is well behaved, and no cell of the displayed set is already occupied by the owner of $S$. Second, the final clause holds because adding one cell raises $\nu$ by at most $1$.

\begin{corollary}[tempo]
\label{cor:tempo}
A player holding fewer than $n-1$ stones has $\nu\le n-2$ and hence no threat; a player holding fewer than $n$ stones has not won. If a player's set contains a matching $M$ of size $n-1$ missing row $b$ and column $d$ and no other cells, then $D_R=\{b\}$, $D_C=\{d\}$, and $(b,d)$ is the unique completing cell.
\end{corollary}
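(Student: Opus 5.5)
The corollary has three parts, and I will derive each directly from Lemma~\ref{lem:threat} together with the trivial bound $\nu(S)\le|S|$.

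For the first part, suppose a player's set $S$ has $|S|\le n-2$. A matching is a subset of $S$, so $\nu(S)\le|S|\le n-2$. The final clause of Lemma~\ref{lem:threat} then says that no single cell completes $S$. Hence no unoccupied cell completes $S$, and by definition the player threatens nothing.

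For the second part, suppose $|S|\le n-1$. Then $\nu(S)\le n-1<n$. Since winning means $\nu(S)=n$, the player has not won.

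For the third part, suppose $S=M$, where $M$ is a matching of size $n-1$ missing row $b$ and column $d$. Then $\nu(S)=n-1$: the matching $M$ gives $\nu(S)\ge n-1$, and $|S|=n-1$ gives $\nu(S)\le n-1$. The key point is that the only maximum matching of $S$ is $M$ itself, because it is the only subset of $S$ with $n-1$ cells. Every maximum matching therefore exposes exactly row $b$ and column $d$, so $D_R=\{b\}$ and $D_C=\{d\}$.

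Applying Lemma~\ref{lem:threat}, the set of completing cells is $D_R\times D_C=\{(b,d)\}$. So $(b,d)$ is the unique completing cell. I would also check that $(b,d)\notin S$, which holds because $M$ covers neither $b$ nor $d$. There is no real obstacle here. The only step needing care is the uniqueness of the maximum matching, and that follows from the cardinality count $|S|=n-1$.
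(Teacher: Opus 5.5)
Your proof is correct and is essentially the argument the paper leaves implicit. The first two claims follow from $\nu(S)\le|S|$ together with the final clause of Lemma~\ref{lem:threat}. For the third, when $S=M$, the set $M$ is the only $(n-1)$-subset of $S$ and hence its unique maximum matching, so $D_R\times D_C=\{(b,d)\}$.
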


Corollary~\ref{cor:tempo} says that the first player to assemble an $(n-1)$-matching can complete it, while the opponent---still short of $n-1$ stones---has no threat at all. It does not by itself imply that the completing cell is free; that is supplied by the tie-break of \S\ref{sec:strategy}, which preserves Invariant~\ref{inv:openblock} through X's last Phase-1 move.

\begin{lemma}[growth of the threat rectangle]
\label{lem:rectangle}
Let $M$ be a matching of size $n-1$ missing row $b$ and column $d$, with induced bijection $\sigma:R\setminus\{b\}\to C\setminus\{d\}$. Let $r\ne s$ in $R\setminus\{b\}$. In each case below, $\nu(S)=n-1$, with:
\begin{center}
\begin{tabular}{@{}llll@{}}
\toprule
$S$ & $D_R$ & $D_C$ & completing cells\\
\midrule
(a) $M\cup\{(b,\sigma(r))\}$
& $\{b,r\}$ & $\{d\}$
& $(b,d),(r,d)$\\
(b) $M\cup\{(r,d)\}$
& $\{b\}$ & $\{d,\sigma(r)\}$
& $(b,d),(b,\sigma(r))$\\
(c) $M\cup\{(b,\sigma(r)),(s,d)\}$
& $\{b,r\}$ & $\{d,\sigma(s)\}$
& $\{b,r\}\times\{d,\sigma(s)\}$\\
(d) $M\cup\{(r,d),(b,\sigma(s))\}$
& $\{b,s\}$ & $\{d,\sigma(r)\}$
& $\{b,s\}\times\{d,\sigma(r)\}$\\
\bottomrule
\end{tabular}
\end{center}
\end{lemma}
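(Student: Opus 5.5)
The plan is to reduce every case to Lemma~\ref{lem:threat}. We first check that $\nu(S)=n-1$. Then we compute $D_R$ and $D_C$ directly, by listing all maximum matchings of $S$, or at least the rows and columns they can expose. The value $\nu(S)\ge n-1$ is immediate, since $M\subseteq S$. For $\nu(S)\le n-1$ we use a counting argument. In cases (a) and (b), column $d$ (resp.\ row $b$) meets $S$ in no cell, since the only extra cell lies in row $b$, col $\sigma(r)$ (resp.\ row $r$, col $d$). Hence no perfect matching exists. In cases (c) and (d) this shortcut fails, because both row $b$ and column $d$ are now touched. There we argue directly: a perfect matching must use the unique $S$-cell in row $b$ and the unique $S$-cell in column $d$. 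In (c) these are $(b,\sigma(r))$ and $(s,d)$. The remaining $n-2$ rows $R\setminus\{b,s\}$ must then be matched into $C\setminus\{d,\sigma(r)\}$ using only $M$-cells. But row $r$'s only cell in $S$ is $(r,\sigma(r))$, whose column is already used, so we get a contradiction. Case (d) is symmetric, with row $s$ forced to column $\sigma(s)$, which is already used by $(b,\sigma(s))$. So $\nu(S)=n-1$ throughout.

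Next we identify the exposed rows and columns. For the inclusions ``$\supseteq$'' we exhibit explicit maximum matchings. In (a), $M$ exposes $(b,d)$, and $M-(r,\sigma(r))+(b,\sigma(r))$ exposes $(r,d)$. In (b), we use $M$ and $M-(r,\sigma(r))+(r,d)$, which exposes $(b,\sigma(r))$. In (c), we have $M$, the two single swaps just described (one giving row $r$, one giving column $\sigma(s)$), and the double swap $M-(r,\sigma(r))-(s,\sigma(s))+(b,\sigma(r))+(s,d)$, which exposes $r$ and $\sigma(s)$. Case (d) is analogous, with the roles swapped. Since Lemma~\ref{lem:threat} gives completing cells $=D_R\times D_C$, it suffices to find one matching exposing each listed row and each listed column.

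For the inclusions ``$\subseteq$'' we use degrees. Every row other than $b$ and the row of the extra row-$b$ cell's $M$-partner has degree one in $S$, and it lies on a column whose only other $S$-neighbours are controlled. The cleanest route is: any maximum matching $N$ of size $n-1$ covers all but one row. If the exposed row is some $t\notin D_R$ as listed, then $N$ covers $b$ and covers $r$ (in (a) and (c)). Row $b$ can only use column $\sigma(r)$, so $r$ cannot use its only column $\sigma(r)$, a contradiction. In case (b), row $b$ is isolated in $S$, so it is always exposed and $D_R=\{b\}$. The column statements are dual: in (a), column $d$ is isolated. In (c) and (d), column $d$ can only be matched to $s$ (resp.\ $r$), which forces column $\sigma(s)$ (resp.\ $\sigma(r)$) to be exposed whenever $d$ is covered.

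The main obstacle is simply bookkeeping the ``$\subseteq$'' direction cleanly in cases (c) and (d). Since $r\ne s$, the row-$b$ conflict and the column-$d$ conflict involve disjoint $M$-edges, so the two arguments run independently and combine to give the product rectangle.
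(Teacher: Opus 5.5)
Your proof is correct and is essentially the paper's argument. You use the same row-$b$ forcing argument to show $\nu(S)\le n-1$ and the same four matchings (including the double swap) for ``$\supseteq$''. Your ``$\subseteq$'' argument, that two lines competing for a single neighbour cannot both be covered, is exactly the content of the paper's decomposition of $S$ in case (c) into isolated $M$-edges plus the two-edge paths $b-\sigma(r)-r$ and $d-s-\sigma(s)$, each of which contributes one edge to any maximum matching. You should drop the stray sentence claiming every other row has degree one, since it is false (in (c) row $s$ has degree $2$) and your actual argument never uses it.
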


The four cases are summarized visually in Figure~\ref{fig:lemma3}; the shaded
rectangle in each panel is the resulting completing set
$D_R\times D_C$.

\begin{figure}[ht]
\centering
\includegraphics[width=\textwidth]{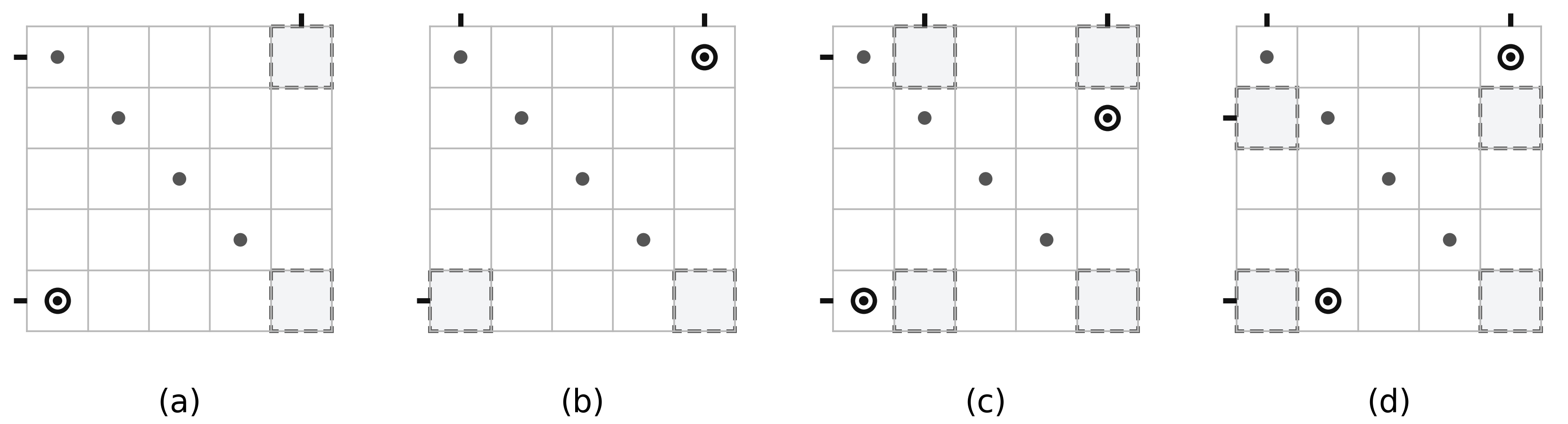}
\caption{The configurations in Lemma~\ref{lem:rectangle} for an $(n-1)$-matching
$M=\{(i,i):1\le i\le4\}$, with
$b=d=5$, $\sigma=\mathrm{id}$, $r=1$, and $s=2$. Filled dots are existing edges of $M$; the outlined dots are newly added cells. The dashed rectangle is the resulting completing set
$D_R\times D_C$. For example, in (a), adding $(b,\sigma(r))$ enlarges the exposed rows
from $\{b\}$ to $\{b,r\}$, giving completing cells
$\{b,r\}\times\{d\}$.}
\label{fig:lemma3}
\end{figure}
\begin{proof}
In each case $\nu(S)<n$: a perfect matching would have to cover row $b$, whose only cell in $S$ is $(b,\sigma(r))$ in (a),(c) and $(b,\sigma(s))$ in (d)---after which row $r$ (resp.\ $s$) has only $(r,\sigma(r))$ (resp.\ $(s,\sigma(s))$) available, whose column is already used; in (b) row $b$ is empty. So $\nu(S)=n-1$, witnessed by $M$.

For the exposed sets, it is cleanest to decompose $S$ into components. In (c) these are $n-3$ isolated edges $(i,\sigma(i))$, $i\notin\{b,r,s\}$, together with the paths $b-\sigma(r)-r$ and $d-s-\sigma(s)$. A maximum matching takes all forced edges and one edge from each path, giving the four matchings $M$, $M-(r,\sigma(r))+(b,\sigma(r))$, $M-(s,\sigma(s))+(s,d)$ and $M-(r,\sigma(r))-(s,\sigma(s))+(b,\sigma(r))+(s,d)$, exposing the pairs $(b,d),(r,d),(b,\sigma(s)),(r,\sigma(s))$ respectively; hence $D_R=\{b,r\}$, $D_C=\{d,\sigma(s)\}$, and Lemma~\ref{lem:threat} gives the product. Cases (a),(b) drop one path, and (d) is the mirror of (c).
\end{proof}

\begin{remark}
Only cells in row $b$ or column $d$ create completing cells: adding $(p,q)$ with $p\ne b,\ q\ne d$ to $M$ creates the $3$-edge path $\sigma(p)-p-q-\sigma^{-1}(q)$, whose unique maximum matching consists of the two $M$-edges; so $M$ remains the unique maximum matching, and  $D_R=\{b\},D_C=\{d\}$.
\end{remark}

%---------------------------------------------------------------
\section{The Winning Strategy}
\label{sec:strategy}
%---------------------------------------------------------------

This section states X's strategy in full, from the opening to the final double threat; \S\ref{sec:proof} proves that every choice it calls for is available and that it wins.

\subsection*{Phase 1: building the $(n-1)$-matching (X's moves $1,\dots,n-1$)}

Let $U_R,U_C$ be the rows and columns, respectively, containing no X-stone, and let the \textbf{open block} be $H=U_R\times U_C$. X will keep their stones a matching inside the successive open blocks, so $|U_R|=|U_C|=n-k$ after their $k$-th move.

\begin{invariant}
\label{inv:openblock}
$H$ contains no O-stone.
\end{invariant}

\noindent\textbf{X's Phase-1 rule (moves $1,\dots,n-2$).} Move $1$: any cell. Move $k$ ($2\le k\le n-2$): let $x$ be O's most recent stone. If $x\in H$, play a free cell of $H$ in $x$'s row; otherwise play any free cell of $H$. Move $n-1$ is governed by the tie-break below. Lemma~\ref{lem:phase1} shows that this rule is always executable and that Invariant~\ref{inv:openblock} holds after each of X's moves.

Note also that O can neither win nor threaten during plies $1,\dots,2n-4$: they then hold at most $n-2$ stones, so Corollary~\ref{cor:tempo} applies. X is therefore never obliged to answer anything in Phase 1.

\subsection*{The tie-break at X's move $n-1$}

Just before move $n-1$ we have $|U_R|=|U_C|=2$, say $H=\{u_1,u_2\}\times\{v_1,v_2\}$, and X's $n-2$ stones form a perfect matching $M_0$ of $A\times B$, where $A=R\setminus\{u_1,u_2\}$, $B=C\setminus\{v_1,v_2\}$. If X plays $(u_a,v_c)$ then $(b,d)=(u_{3-a},v_{3-c})$. We call a cell of $H$ \textbf{admissible} if it is free and its opposite corner is free. This is precisely the condition that keeps $(b,d)$ free, i.e., that preserves Invariant~\ref{inv:openblock}. Writing $F$ for the $n-2$ O-stones already placed, each candidate outcome $(b,d)$ carries the parameter
\[
w:=\bigl|F\cap(\text{row }b\cup\text{column }d)\bigr| .
\]
\noindent\textbf{Existence of an admissible cell.} By Invariant~\ref{inv:openblock} and Lemma~\ref{lem:phase1}, $H$ contains no X-stone and at most one O-stone, namely O's latest $x_{n-2}$.

\begin{itemize}[leftmargin=2em]
\item If $x_{n-2}=(u_i,v_j)\in H$: three cells are free, of which $(u_{3-i},v_{3-j})$ is inadmissible, since its opposite is $x_{n-2}$, and the remaining two, $(u_{3-i},v_j)$ and $(u_i,v_{3-j})$, are admissible. They give the two possible outcomes $(b,d)=(u_i,v_{3-j})$---then $x_{n-2}$ lies in row $b$---and $(b,d)=(u_{3-i},v_j)$---then $x_{n-2}$ lies in column $d$. Either way, $w\ge1$.
\item If $x_{n-2}\notin H$: all four cells are free, and hence admissible and possible values of $(b,d)$.
\end{itemize}

\begin{quote}
\textbf{X's tie-break.} Choose an admissible outcome with $1\le w\le n-3$ if one exists; otherwise choose any admissible outcome with $w\le n-3$ (one exists by Lemma~\ref{lem:tiebreak}, and it then necessarily has $w=0$).
\end{quote}

In particular, the tie-break always delivers $w\le n-3$.

\medskip

\noindent\textbf{Consequence.} After move $n-1$ (ply $2n-3$), $|U_R|=|U_C|=1$; write $U_R=\{b\}$, $U_C=\{d\}$. Admissibility keeps $(b,d)$ free, while X owns exactly a matching $M$ of size $n-1$ missing row $b$ and column $d$.

\medskip

By Corollary~\ref{cor:tempo}, X threatens $(b,d)$ at ply $2n-3$, whereas O, holding $n-2$ stones, has no threat and has not won. So O must play $(b,d)$ at ply $2n-2$ or lose at ply $2n-1$. From ply $2n-2$ on, O is forced, subject only to the verification (Lemma~\ref{lem:nodeviation}) that they never acquire a threat or a win.

\subsection*{Phase 2: the two plans}

Write $\sigma:R\setminus\{b\}\to C\setminus\{d\}$ for the bijection induced by $M$. Call $s\in R\setminus\{b\}$ \textbf{live} if $(s,d)$ and $(b,\sigma(s))$ are both free at ply $2n-3$ (Figure~\ref{fig:two-live-rows}). Since $M$ misses row $b$ and column $d$, X owns none of the $2(n-1)$ distinct cells
\[
\{(s,d):s\ne b\}\ \cup\ \{(b,\sigma(s)):s\ne b\}\ =\ \bigl(\text{row }b\cup\text{col }d\bigr)\setminus\{(b,d)\},
\]
and $(b,d)$ is free, so the only occupied ones are the $w$ cells of $F$ there; each kills at most one row $s$. Hence
\begin{equation}
\label{eq:live}
\ell:=\#\{\text{live rows}\}\ \ge\ (n-1)-w.
\end{equation}

\begin{figure}[htbp]
\centering

\begin{subfigure}[t]{0.43\textwidth}
    \centering
    \includegraphics[width=\textwidth]{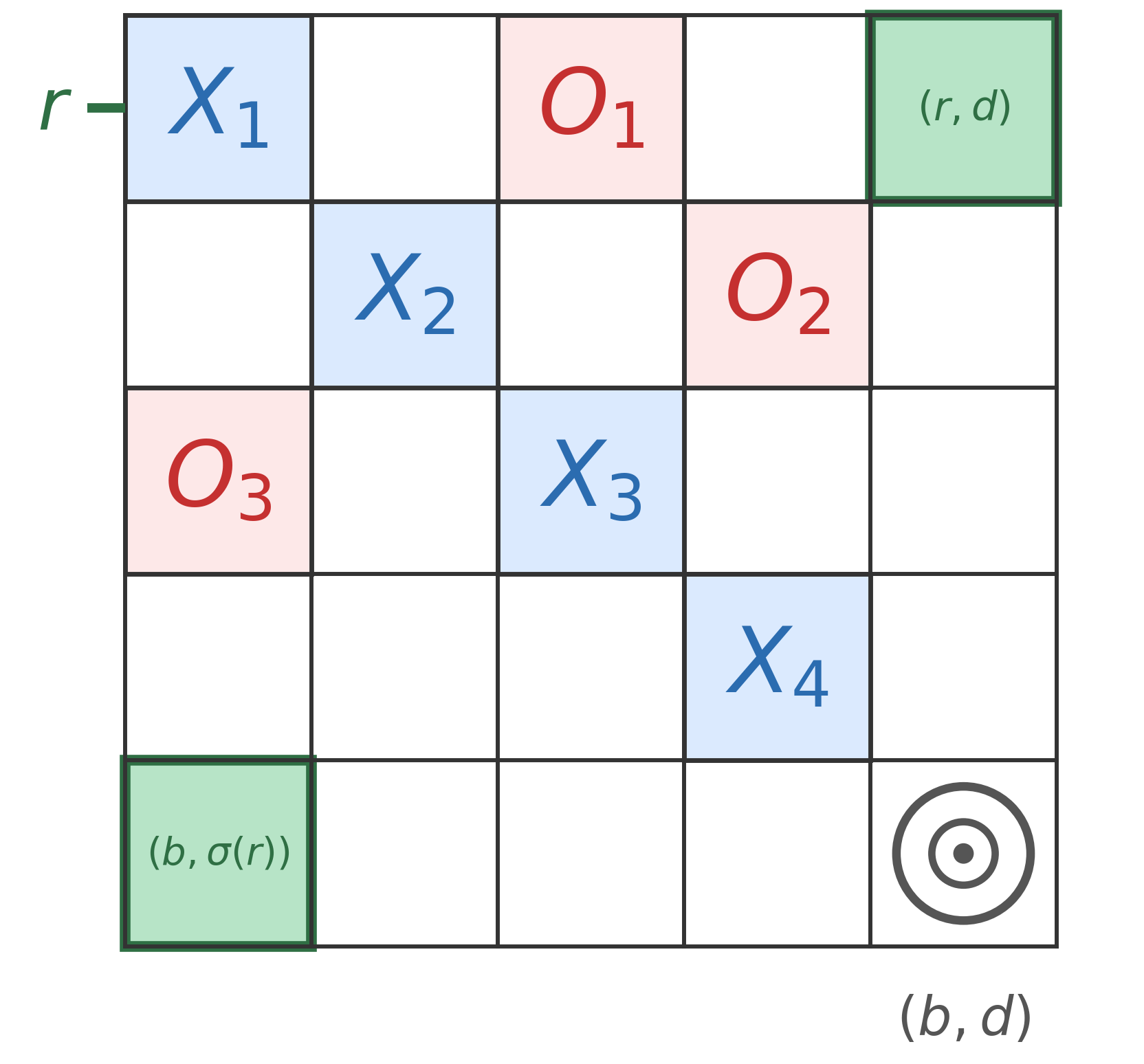}
    \caption{Row $r$ is live.}
    \label{fig:two-live-rows-r}
\end{subfigure}
\hfill
\begin{subfigure}[t]{0.43\textwidth}
    \centering
    \includegraphics[width=\textwidth]{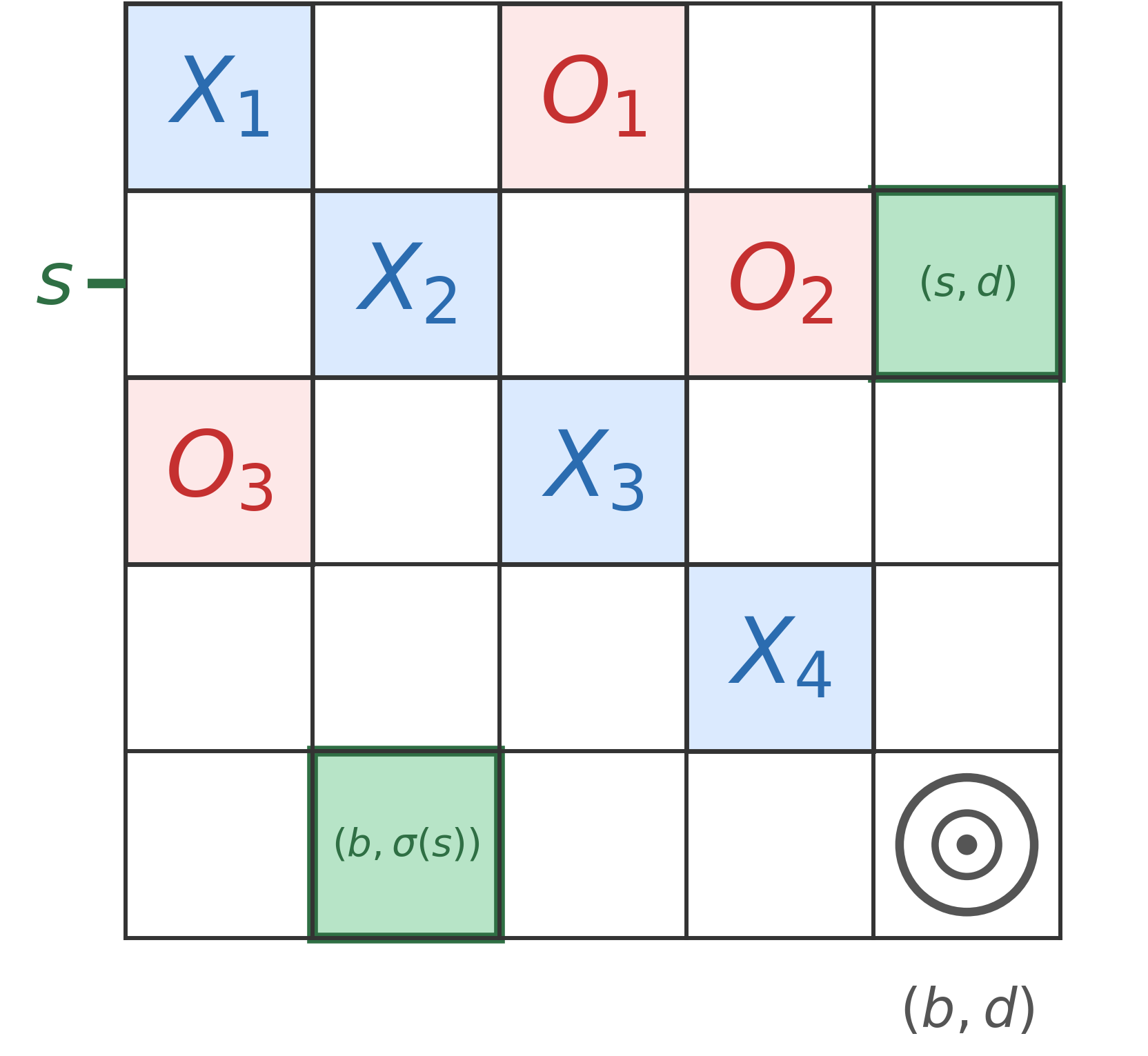}
    \caption{Row $s$ is live.}
    \label{fig:two-live-rows-s}
\end{subfigure}

\caption{
    Two live rows in the same Phase-2 position, with $(b, d)=(5, 5)$.
}
\label{fig:two-live-rows}
\end{figure}

\noindent\textbf{The two plans.} For distinct live rows $r,s$:

\begin{itemize}[leftmargin=2em]
\item \textbf{Plan (i)}---ply $2n-1$: play $(b,\sigma(r))$; ply $2n+1$: play $(s,d)$. Requires the \textit{cross cell} $(r,\sigma(s))$ to be free at ply $2n-3$ (Figure~\ref{fig:phase2-forcing}).
\item \textbf{Plan (ii)}---ply $2n-1$: play $(r,d)$; ply $2n+1$: play $(b,\sigma(s))$. Requires the \textit{cross cell} $(s,\sigma(r))$ to be free at ply $2n-3$.
\end{itemize}
\begin{figure}[ht]
\centering

\begin{subfigure}[t]{0.2829\textwidth}
    \centering
    \includegraphics[width=\textwidth]{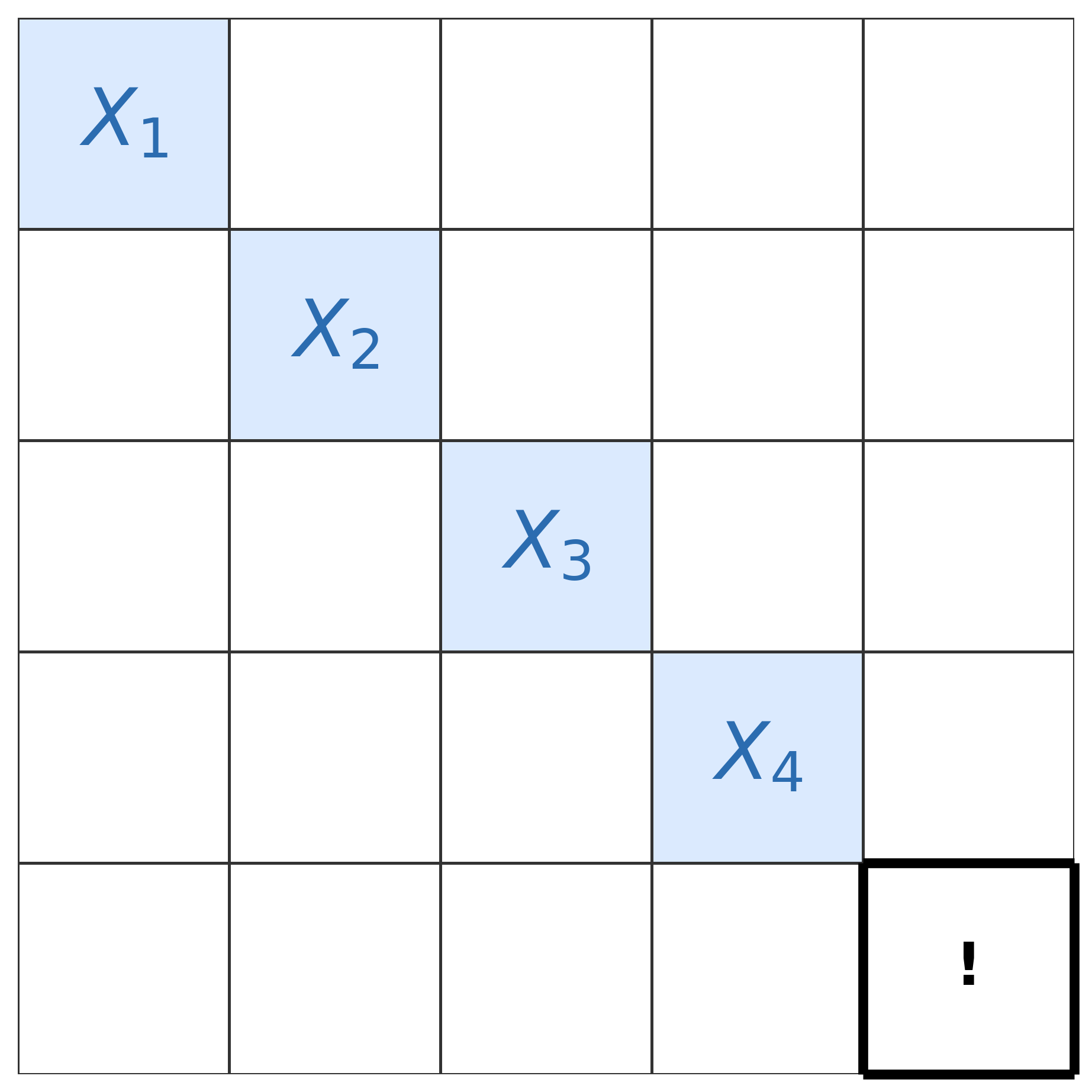}
    \caption{Initial $(n-1)$-matching.}
    \label{fig:phase2_matching}
\end{subfigure}
\hfill
\begin{subfigure}[t]{0.2829\textwidth}
    \centering
    \includegraphics[width=\textwidth]{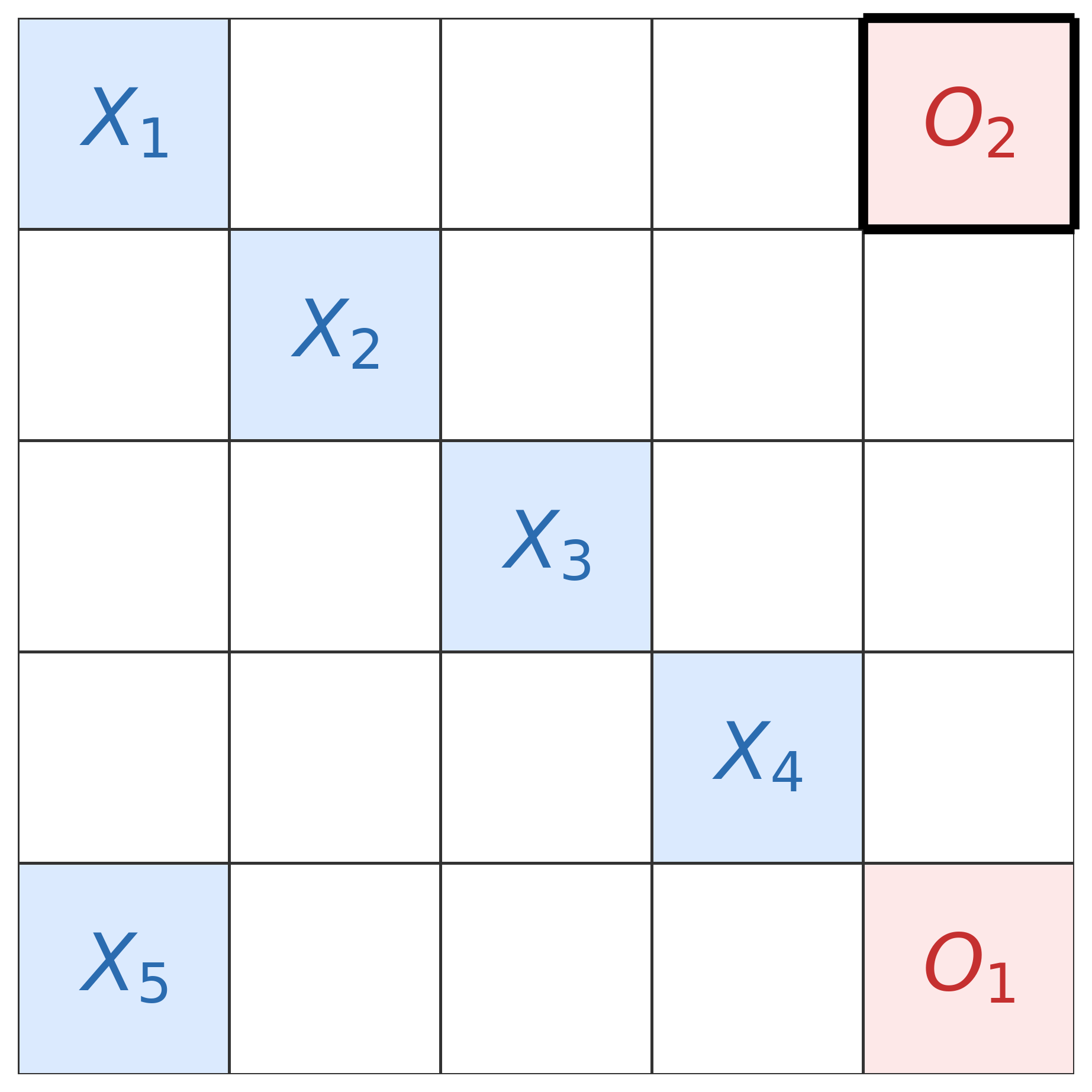}
    \caption{O blocks next threat.}
    \label{fig:phase2-single}
\end{subfigure}
\hfill
\begin{subfigure}[t]{0.2829\textwidth}
    \centering
    \includegraphics[width=\textwidth]{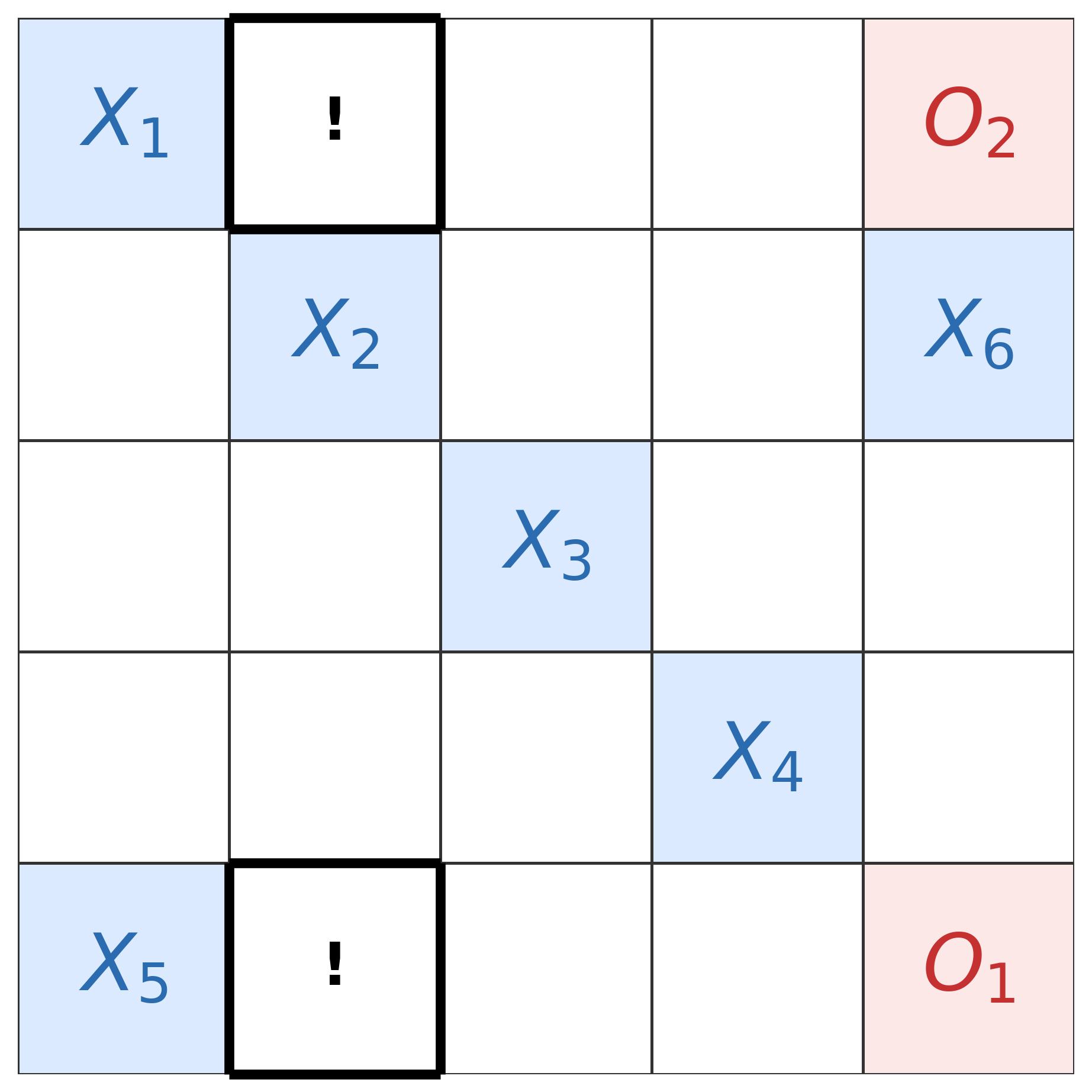}
    \caption{A double threat.}
    \label{fig:phase2-double}
\end{subfigure}

\caption{
    Phase 2 forcing mechanism. Here $n=5$, $\sigma(i)=i$, and the initial
    $(n-1)$-matching is
    $M=\{(1,1),(2,2),(3,3),(4,4)\}$.
    The unmatched row and column are $b=d=5$.
    We take $r=1$ and $s=2$.
    After $X_5$ is placed on $(b,\sigma(r))=(5,1)$,
    $O_2$ blocks the threat on $(r,d)=(1,5)$.
    $X_6$ on $(s,d)=(2,5)$ then threatens both $(5,2)$ and $(1,2)$.
}
\label{fig:phase2-forcing}
\end{figure}
\subsection*{X's move rule}

\begin{enumerate}[leftmargin=2em, label=\thesection.\arabic*, ref=\thesection.\arabic*]
\item \textbf{Moves $1, \dots, n-2$:} the Phase-1 rule above.
\item \textbf{Move $n-1$:} the tie-break above.
\item \label{rule:plan} \textbf{Plan and pair:}
  \begin{itemize}[leftmargin=1.5em]
  \item If $F$ meets column $d$: \textbf{plan (i)}, with any admissible pair $(r,s)$ (Lemma~\ref{lem:pair}).
  \item Else if $F$ meets row $b$: \textbf{plan (ii)}, with any admissible pair $(r,s)$ (Lemma~\ref{lem:pair}).
  \item Else ($w=0$): if $\nu(F)\le n-3$, \textbf{plan (i)} with any admissible pair; if $\nu(F)=n-2$, \textbf{plan (i)} with $s:=u_a$ (X's last Phase-1 row, in the notation of the tie-break above) and any live $r\ne u_a$---the cross cell is then automatically free (Lemma~\ref{lem:pairw0}).
  \end{itemize}
\item \label{rule:deviation} \textbf{If O ever declines to block}, X immediately plays a free completing cell of their own set and wins no later than ply $2n+3$. That such a cell exists is exactly what ``declines to block'' means: at plies $2n-2$ and $2n$, X has a unique free completing cell, and O's non-blocking move does not take it; at ply $2n+2$ they have two, and one move cannot take both.
\end{enumerate}

\noindent\textbf{Plan (i) against a blocking O.} At ply $2n-3$, X threatens $(b,d)$, and O blocks it. X plays $(b,\sigma(r))$: by Lemma~\ref{lem:rectangle}(a), the completing cells are $(b,d)$, which is occupied by O, and $(r,d)$, creating exactly one threat, so O must take $(r,d)$. X plays $(s,d)$: by Lemma~\ref{lem:rectangle}(c), the completing cells are $\{b,r\}\times\{d,\sigma(s)\}$, of which $(b,d),(r,d)$ are O's and $(b,\sigma(s)),(r,\sigma(s))$ are free, creating a double threat. O blocks one; X takes the other at ply $2n+3$ and wins. Plan (ii) is the mirror image via Lemma~\ref{lem:rectangle}(b),(d): O's forced blocks are $(b,d)$ then $(b,\sigma(r))$, and the final double threat is on $(s,d)$ and $(s,\sigma(r))$.

%---------------------------------------------------------------
\section{Proof of the Main Theorem}
\label{sec:proof}
%---------------------------------------------------------------

We now show that the strategy of \S\ref{sec:strategy} is well defined at every ply and that it wins by ply $2n+3$. The lemmas below supply, in order, the executability of Phase 1, the existence of a tie-break with $w\le n-3$, the existence of a suitable pair of live rows, the persistence of the cells X needs, and the impossibility of any useful deviation by O.

\medskip

\subsection*{Phase 1 is executable.}

\begin{lemma}[feasibility of Phase 1]
\label{lem:phase1}
The Phase-1 rule is always executable, and Invariant~\ref{inv:openblock} holds after each of X's moves $1,\dots,n-2$.
\end{lemma}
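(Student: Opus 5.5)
We argue by induction on $k$, the index of X's move, maintaining a stronger statement than Invariant~\ref{inv:openblock}. Just after X's $k$-th move ($1\le k\le n-2$) we require three things. First, X's $k$ stones form a matching with $|U_R|=|U_C|=n-k$. Second, $H=U_R\times U_C$ contains no O-stone. Third, just before X's move $k+1$ (after O's $k$-th stone $x_k$), $H$ contains at most one O-stone, namely $x_k$, and contains no X-stone. The last clause is the one quoted later in the tie-break discussion, so we prove it alongside the invariant.

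\textbf{Base case.} Move $1$ is any cell $(r_1,c_1)$. Then $H=(R\setminus\{r_1\})\times(C\setminus\{c_1\})$, X has one stone, it lies outside $H$, and O has no stones yet. Hence $H$ contains no O-stone.

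\textbf{Inductive step.} Suppose the invariant holds after X's move $k-1$, with $2\le k\le n-2$. O's stone $x=x_{k-1}$ is the only O-stone that can lie in $H$, and $H$ has no X-stones because X's stones avoid all rows in $U_R$. We split on where $x$ lies.

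\emph{Case $x=(p,q)\in H$.} Row $p$ meets $H$ in $|U_C|=n-k+1\ge3$ cells (since $k\le n-2$), and only $(p,q)$ among them is occupied. So a free cell $(p,c)$ with $c\in U_C\setminus\{q\}$ exists, and X plays it. The new open block is $(U_R\setminus\{p\})\times(U_C\setminus\{c\})$. It excludes row $p$ and hence excludes $x$. Every other O-stone was already outside the old $H$, and the new $H$ is a subset of the old one, so the invariant is preserved.

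\emph{Case $x\notin H$.} Then $H$ has no occupied cell at all. It is nonempty because $|U_R|\ge 3$, so X plays any cell of it. The new $H$ is a subset of the old one, which contained no O-stones, so the invariant is preserved.

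In both cases X's new stone lies in a row and a column previously free of X-stones. So X's stones remain a matching, and $|U_R|$ and $|U_C|$ each drop by exactly one.

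\textbf{Main obstacle.} The only delicate point is executability in the case $x\in H$: we need row $p$ to contain a free cell of $H$ other than $x$. This needs $|U_C|\ge2$ at that moment, i.e.\ $n-k+1\ge2$, which holds with room to spare for $k\le n-2$. We also need that no other stone occupies row $p$ inside $H$. That follows from the invariant (no other O-stone in $H$) together with the fact that X-stones never lie in $H$. Both are part of the induction hypothesis, so the argument closes.
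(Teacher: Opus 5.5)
Your proof is correct and follows the paper's argument essentially step for step. It uses the same observation that only O's most recent stone can lie in $H$, the same case split on whether $x_{k-1}\in H$, and the same count $|U_C|=n-k+1\ge 3$ guaranteeing a free cell of $H$ in that stone's row. Your explicit strengthened induction hypothesis, including the matching property and the at-most-one-O-stone clause used later in the tie-break, spells out what the paper leaves implicit.
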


\begin{proof}
X's stones lie outside $U_R\times U_C$ by construction, so $H$ never contains an X-stone; $H$ only shrinks, so no stone can enter it. Inductively, before X's move $k$, the only O-stone that can lie in $H$ is O's most recent stone, $x_{k-1}$. Before move $k$, we have $|U_R|=|U_C|=m=n-k+1\ge 3$. If $x_{k-1}=(p,q)\in H$, the $m$ cells of row $p$ inside $H$ are free except for $x_{k-1}$ itself, leaving $m-1\ge2$ choices; playing one puts an X-stone in row $p$, deleting $p$ from $U_R$, so $x_{k-1}\notin H$ afterwards and the invariant is restored. If $x_{k-1}\notin H$, all $m^2$ cells of $H$ are free and preserve the invariant.
\end{proof}

\subsection*{The tie-break exists and gives $w\le n-3$.}

\begin{lemma}[good tie-break]
\label{lem:tiebreak}
Some admissible outcome has $w\le n-3$.
\end{lemma}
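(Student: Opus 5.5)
The plan is a direct counting argument. I split according to the two cases already distinguished in the paragraph ``Existence of an admissible cell'', and in each case exhibit two admissible outcomes whose critical line pairs (row $b$ $\cup$ column $d$) overlap only inside $H$. Since Invariant~\ref{inv:openblock} and Lemma~\ref{lem:phase1} tell us exactly which cells of $H$ can hold O-stones, the two values of $w$ then add up to at most roughly $|F|=n-2$. So the smaller of the two is small.

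\emph{Case $x_{n-2}=(u_i,v_j)\in H$.} The admissible outcomes are $(b,d)=(u_i,v_{3-j})$ and $(b,d)=(u_{3-i},v_j)$, with line pairs
\[
L_1=\text{row }u_i\cup\text{col }v_{3-j},\qquad L_2=\text{row }u_{3-i}\cup\text{col }v_j .
\]
\begin{itemize}[leftmargin=2em]
\item Their intersection is $\{(u_i,v_j),(u_{3-i},v_{3-j})\}\subseteq H$.
\item By Lemma~\ref{lem:phase1}, the only O-stone in $H$ is $x_{n-2}=(u_i,v_j)$, which lies in both $L_1$ and $L_2$; the cell $(u_{3-i},v_{3-j})$ is free.
\item Each of the other $n-3$ O-stones lies outside $H$, hence in at most one of $L_1,L_2$.
\end{itemize}
Therefore $w_1+w_2\le 2+(n-3)=n-1$, so $\min(w_1,w_2)\le\lfloor (n-1)/2\rfloor$. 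This is at most $n-3$ exactly when $n\ge4$: for $n=4$ it gives $\le1$, and for $n\ge5$ we have $(n-1)/2\le n-3$.

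\emph{Case $x_{n-2}\notin H$.} All four cells of $H$ are admissible, and $F\cap H=\emptyset$. I use the two diagonal outcomes $(b,d)=(u_1,v_1)$ and $(u_2,v_2)$. Their line pairs meet only in $(u_1,v_2)$ and $(u_2,v_1)$, both in $H$ and hence free of O-stones. Thus $w_1+w_2\le n-2$, and $\min(w_1,w_2)\le\lfloor (n-2)/2\rfloor\le n-3$ for $n\ge4$.

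The only delicate point is the bookkeeping in the first case. Because $x_{n-2}$ is double-counted, the bound relies on $n\ge4$: at $n=3$ we would have $n-3=0$, and the sole O-stone $x_1$ lies in both line pairs, so both outcomes have $w=1$. This is presumably one of the two places where the hypothesis $n\ge4$ enters. Everything else is a direct count using Lemma~\ref{lem:phase1}.
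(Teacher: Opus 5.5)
Your proof is correct and is essentially the paper's argument: you use the same split on whether $x_{n-2}\in H$ and the same two outcomes in each case. You also make the same key observation, that the two line pairs meet only in two cells of $H$, which hold at most the single O-stone $x_{n-2}$. The paper phrases this as a contradiction (if both outcomes had $w=n-2$, $F$ would lie in that intersection, forcing $|F|\le 1$), whereas your inequality $w_1+w_2\le |F|+|F\cap L_1\cap L_2|$ is the direct form of the same count and also gives the explicit bound $\min(w_1,w_2)\le\lfloor (n-1)/2\rfloor$.
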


\begin{proof}
Assume every admissible outcome has $w > n-3$. Since $|F|=n-2$, this means every admissible outcome has $w=n-2$, i.e.\ $F\subseteq\text{row }b\,\cup\,\text{col }d$ in each case.

\emph{If $x_{n-2}=(u_i,v_j)\in H$:} intersecting the two constraints of the two admissible outcomes,
\[
(\text{row }u_i\cup\text{col }v_{3-j})\cap(\text{row }u_{3-i}\cup\text{col }v_j)=\{(u_i,v_j),(u_{3-i},v_{3-j})\}\subseteq H,
\]
using $\text{row }u_i\cap\text{row }u_{3-i}=\varnothing=\text{col }v_j\cap\text{col }v_{3-j}$. By Invariant~\ref{inv:openblock}, $H$ holds at most the one O-stone $x_{n-2}$, so $|F|\le1$, contradicting $|F|=n-2\ge2$ (Figure~\ref{fig:tiebreak-claims-b}).

\emph{If $x_{n-2}\notin H$:} intersect the constraints from the two outcomes $(b,d)=(u_1,v_1)$ and $(b,d)=(u_2,v_2)$ to get $F\subseteq\{(u_1,v_2),(u_2,v_1)\}\subseteq H$; but $H$ contains no O-stone at all, so $F=\varnothing$, again contradicting $n-2\ge2$.
\end{proof}

\begin{remark}
    This is where $n\ge4$ is first needed, as the argument requires $|F|\ge2$.
\end{remark}

\begin{lemma}[structure of $F$ when $w=0$]
\label{lem:structF}
Suppose the tie-break's outcome has $w=0$. Then $x_{n-2}\notin H$, all four corners are admissible, and $F\cap H=\varnothing$. Moreover, if $\nu(F)=n-2$, then $F$ is a perfect matching of $A\times B$ (Figure~\ref{fig:tiebreak-claims-c}).
\end{lemma}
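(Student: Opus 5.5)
The plan is to read off everything from the wording of the tie-break and from Invariant~\ref{inv:openblock}, and then do a short counting argument for the matching claim.

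\emph{First claim: $x_{n-2}\notin H$.} If $x_{n-2}=(u_i,v_j)\in H$, the existence discussion before the tie-break shows that both admissible outcomes have $w\ge1$. The tie-break never outputs an outcome with $w\ge n-2$: either it finds one with $1\le w\le n-3$, or it falls back to one with $w\le n-3$, which then has $w=0$. So in this case the outcome has $w\ge1$, contradicting $w=0$. Hence $x_{n-2}\notin H$.

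\emph{Second and third claims.} Since $x_{n-2}\notin H$, the existence discussion says all four cells of $H$ are free, hence all four corners are admissible. By Lemma~\ref{lem:phase1}, before X's move $n-1$ the only O-stone that can lie in $H$ is $x_{n-2}$. It does not, so $F\cap H=\varnothing$.

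\emph{Matching claim.} Let $(b,d)$ be the chosen outcome, with $b\in\{u_1,u_2\}$ and $d\in\{v_1,v_2\}$. Because $w=0$, no stone of $F$ lies in row $b$ or column $d$. Write $b'$ for the other row of $U_R$ and $d'$ for the other column of $U_C$. Every cell of $F$ then lies in $(A\cup\{b'\})\times(B\cup\{d'\})$ and avoids $(b',d')$, since that cell is in $H$.

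Now suppose $\nu(F)=n-2$. Since $|F|=n-2$, $F$ is itself a matching.

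\begin{itemize}
\item If $F$ has a stone in row $b'$, that stone lies in a column of $B$, because column $d'$ is excluded within row $b'$.
\item I then want to use the tie-break's preference. Consider the alternative admissible outcome $(b',d)$: its $w$ counts the stones of $F$ in row $b'$ or column $d$, namely those in row $b'$ only, so its $w$ is $1$ (recall $F$ is a matching).
\item Since $n-3\ge1$, this alternative satisfies $1\le w\le n-3$. The tie-break would have preferred it over a $w=0$ outcome, a contradiction.
\item Hence row $b'$ contains no stone of $F$. Symmetrically, using the alternative $(b,d')$, column $d'$ contains no stone of $F$.
\end{itemize}

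So $F\subseteq A\times B$ is a matching of size $n-2=|A|=|B|$, that is, a perfect matching of $A\times B$.

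The only subtle step is invoking the tie-break's preference for $1\le w\le n-3$ in order to exclude stones in row $b'$ and column $d'$. This needs $n\ge4$, so that $w=1$ is actually permitted.
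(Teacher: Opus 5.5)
Your proof is correct and follows the paper's argument essentially step for step. You rule out $x_{n-2}\in H$ because both admissible outcomes would then have $w\ge1$, obtain $F\cap H=\varnothing$ from the Phase-1 invariant, and exclude O-stones in row $b'$ and column $d'$ via the alternative outcomes $(b',d)$ and $(b,d')$ with $w'=1\le n-3$, which is exactly where the paper uses $n\ge4$. Your remark that the tie-break never outputs $w\ge n-2$ is unnecessary, since the chosen outcome is itself admissible and so already has $w\ge1$.
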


\begin{proof}
Suppose $w = 0$. If $x_{n-2}\in H$, then both admissible outcomes have $w\ge1$, and by Lemma~\ref{lem:tiebreak}, one of them has $w\le n-3$; the tie-break would then have selected an outcome with $1\le w\le n-3$, contradicting $w=0$. So $x_{n-2}\notin H$, all four cells of $H$ are free and admissible, and by Invariant~\ref{inv:openblock}, no O-stone lies in $H$.

Now assume $\nu(F)=n-2$. Since $|F|=n-2$, $F$ is a matching of size $n-2$, and $w=0$ says $F$ misses row $b=u_{3-a}$ and column $d=v_{3-c}$. Suppose $F$ meets row $u_a$. All four corners are admissible, so consider the outcome $(b',d')=(u_a,v_{3-c})$. As $F$ is a matching, it meets row $u_a$ exactly once, and it misses column $v_{3-c}=d$; hence $w'=1+0=1\le n-3$ (using $n\ge4$), and the tie-break would have fired, contradicting $w=0$. Symmetrically, if $F$ meets column $v_c$, the admissible outcome $(b',d')=(u_{3-a},v_c)$ has $w'=0+1=1\le n-3$, yielding the same contradiction.

So $F$ misses rows $u_1,u_2$ and columns $v_1,v_2$, i.e.\ $F\subseteq A\times B$ with $|F|=n-2=|A|=|B|$; being a matching of that size, it covers every row of $A$ and every column of $B$.
\end{proof}

\begin{figure}[htbp]
\centering
\hfill
\begin{subfigure}[t]{0.5\textwidth}
\centering
\includegraphics[width=\textwidth]{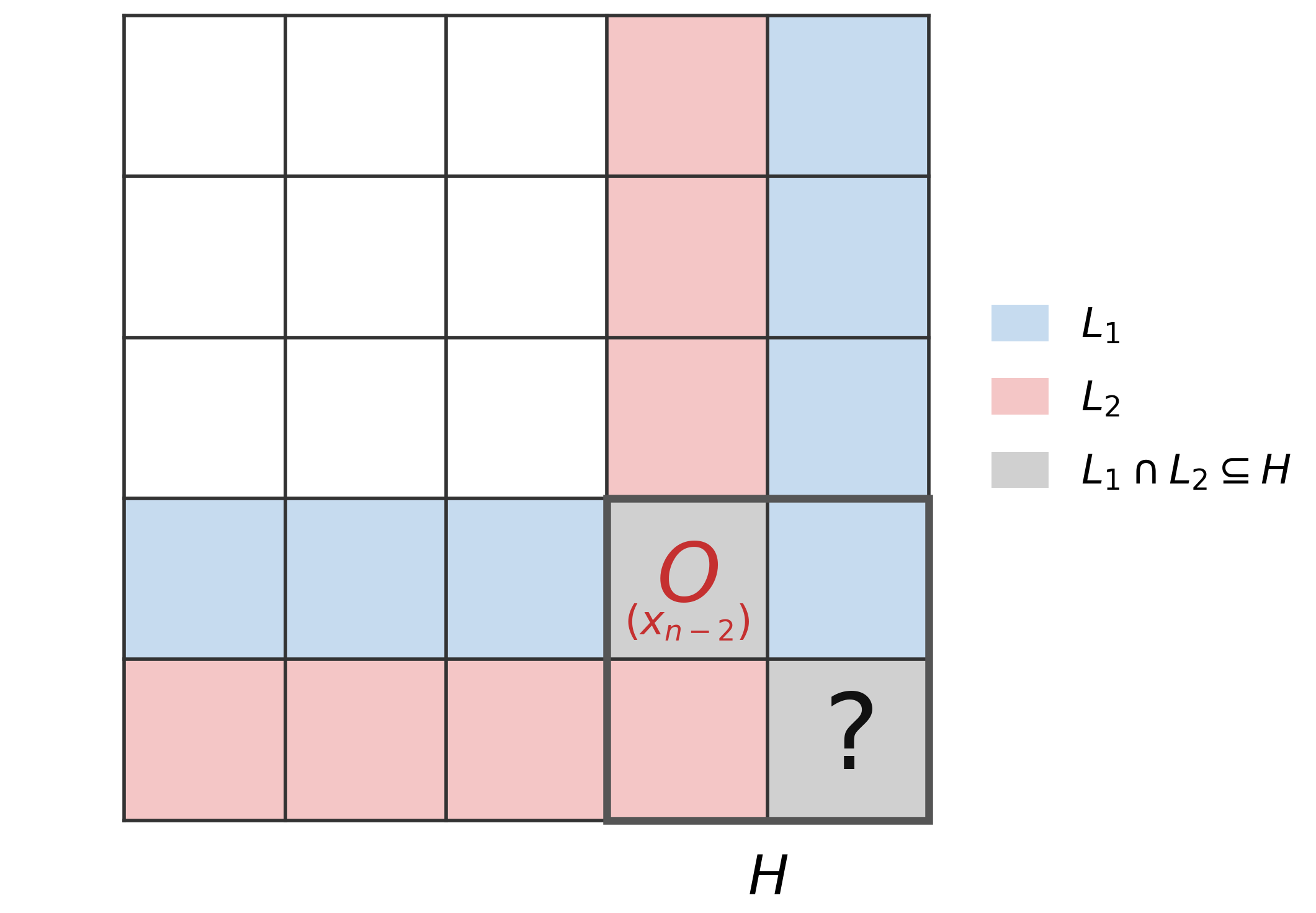}
\caption{Intersection argument used for Lemma~\ref{lem:tiebreak}.}
\label{fig:tiebreak-claims-b}
\end{subfigure}
\hfill
\begin{subfigure}[t]{0.47\textwidth}
\centering
\includegraphics[width=\textwidth]{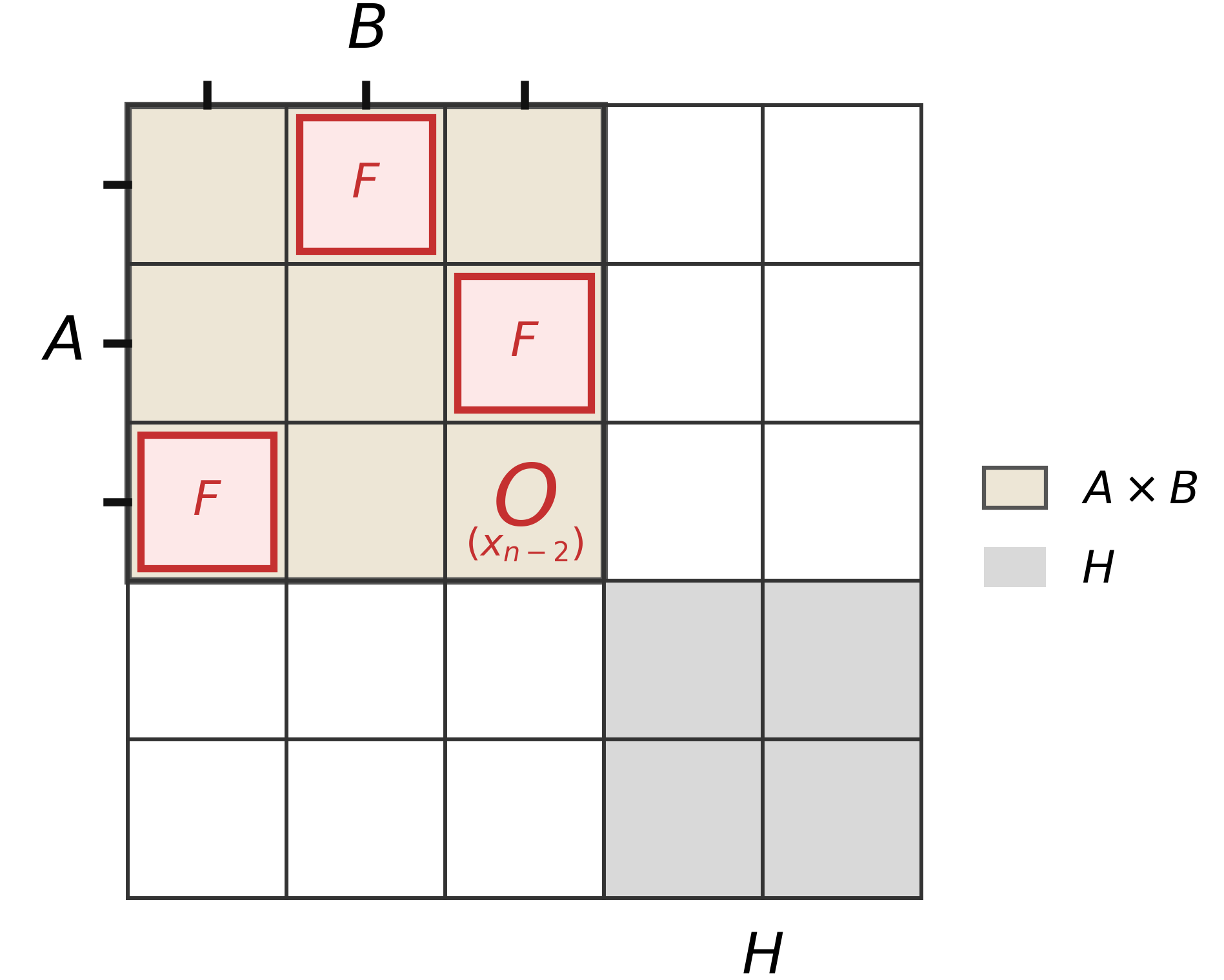}
\caption{Structure of $F$ in Lemma~\ref{lem:structF}.}
\label{fig:tiebreak-claims-c}
\end{subfigure}

\caption{
The structural arguments used in Lemmas~\ref{lem:tiebreak} and~\ref{lem:structF}. Panel~(a) shows the intersection of the line-pairs arising from the admissible outcomes in Lemma~\ref{lem:tiebreak}.
Panel~(b) shows the conclusion of Lemma~\ref{lem:structF} when $w=0$:
$F$ forms a perfect matching within
$A\times B$.
}
\label{fig:tiebreak-claims}
\end{figure}
\subsection*{Suitable live rows exist.}

\begin{lemma}[admissible pair]
\label{lem:pair}
If $w\le n-3$, X can choose $(r,s)$ for either plan.
\end{lemma}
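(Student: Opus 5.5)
The plan is a counting argument over ordered pairs of live rows: there are more candidate cross cells than O-stones that could occupy them. Throughout, fix the position at ply $2n-3$. Write $m:=(n-1)-w$. The hypothesis $w\le n-3$ gives $m\ge 2$, and by \eqref{eq:live} we have $\ell\ge m\ge 2$. So live rows exist in sufficient number, and it remains to find an ordered pair of distinct live rows whose cross cell is free.

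First I identify where the cross cells live and who can occupy them. For plan (i), an ordered pair $(r,s)$ of distinct live rows has cross cell $(r,\sigma(s))$. This cell lies in $(R\setminus\{b\})\times(C\setminus\{d\})$, so it is outside row $b$ and column $d$. Since $r\ne s$ and $\sigma$ is injective, $\sigma(s)\ne\sigma(r)$, so the cell is not an edge of $M$. In Phase 2 X owns exactly $M$, so X does not own any cross cell, and a cross cell is occupied only if it holds a stone of $F$. The $w$ stones of $F$ lying in row $b\cup$ column $d$ can never be cross cells. Hence at most $|F|-w=(n-2)-w=m-1$ O-stones are candidates for blocking a cross cell.

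Next I show that each such O-stone spoils at most one ordered pair. The map $(r,s)\mapsto(r,\sigma(s))$ is injective because $\sigma$ is a bijection: the cell $(p,q)$ determines $r=p$ and $s=\sigma^{-1}(q)$. The number of ordered pairs of distinct live rows is $\ell(\ell-1)\ge m(m-1)\ge 2(m-1)>m-1$, where the final strict inequality uses $m\ge 2$. Therefore some ordered pair $(r,s)$ has a free cross cell, which gives an admissible pair for plan (i).

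For plan (ii) the argument is identical, using the cross cell $(s,\sigma(r))$. The map $(r,s)\mapsto(s,\sigma(r))$ is again injective, with the same off-$M$ and off-(row $b\cup$ column $d$) properties, so the same count applies.

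The only step needing care is the bookkeeping that the $w$ O-stones in row $b\cup$ column $d$ are excluded from the count of potential blockers. This exclusion is exactly what turns the bound into $m-1$ and makes $m\ge2$ (equivalently $w\le n-3$) sufficient. I do not expect any real obstacle here, and in particular $n\ge4$ is not used beyond the hypothesis $w\le n-3$.
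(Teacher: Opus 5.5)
Your proof is correct and matches the paper's argument: both inject ordered pairs of distinct live rows into cross cells, note that these cells avoid $M$ and $\text{row }b\cup\text{col }d$, so only the $n-2-w$ remaining O-stones can block one, and conclude from $\ell(\ell-1)\ge(n-1-w)(n-2-w)\ge 2(n-2-w)>n-2-w$. Writing $m=n-1-w$ is only a change of notation.
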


\begin{proof}
By~\eqref{eq:live}, $\ell\ge n-1-w\ge2$, so ordered pairs of distinct live rows exist. For plan (i), the candidate cross cells are $(r,\sigma(s))$ with $r\ne s$ live; they lie outside row $b$ (as $r\ne b$) and outside column $d$ (as $\sigma(s)\ne d$), and distinct ordered pairs give distinct cells since $\sigma$ is injective. None of these is an X-stone: $(r,\sigma(s))\in M$ would force $\sigma(r)=\sigma(s)$, i.e.\ $r=s$. The only obstructions are thus O's stones outside $\text{row }b\cup\text{col }d$, of which there are exactly $n-2-w\ \ (\ge1)$, each killing at most one ordered pair. Since
\[
\ell(\ell-1)\ \ge\ (n-1-w)(n-2-w)\ \ge\ 2(n-2-w)\ >\ n-2-w,
\]
a valid pair survives. The same count applies to plan (ii), whose cross cells are $(s,\sigma(r))$.
\end{proof}

\begin{lemma}[admissible pair when $w=0$ and $\nu(F)=n-2$]
\label{lem:pairw0}
In case $w=0$ with $\nu(F)=n-2$, the pair $(r, s)$ given by $s=u_a$ and $r$ any live row $\ne u_a$ is valid for plan (i).
\end{lemma}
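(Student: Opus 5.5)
The plan is to unpack the notation of the tie-break and then check the three requirements of plan~(i) one at a time: $s=u_a$ is live, some live row $r\neq u_a$ exists, and the cross cell $(r,\sigma(s))$ is free at ply $2n-3$. All three should follow from Lemma~\ref{lem:structF}, which applies because we are in the case $w=0$, $\nu(F)=n-2$. That lemma gives $F\cap H=\varnothing$, and moreover $F$ is a perfect matching of $A\times B$. In particular, no O-stone lies in rows $u_1,u_2$ or in columns $v_1,v_2$.

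First I fix notation. X's move $n-1$ was $(u_a,v_c)$, so $(b,d)=(u_{3-a},v_{3-c})$. The stone $(u_a,v_c)$ is the edge of $M$ at row $u_a$, so $\sigma(u_a)=v_c$. Since X's other $n-2$ stones form $M_0\subseteq A\times B$, X's only stone in column $v_c$ is $(u_a,v_c)$ itself.

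\emph{Liveness of $s=u_a$.} We need $(u_a,d)=(u_a,v_{3-c})$ and $(b,\sigma(u_a))=(u_{3-a},v_c)$ to be free. Both cells lie in the original block $H=\{u_1,u_2\}\times\{v_1,v_2\}$. Neither is an X-stone, since X's only stone in $H$ is $(u_a,v_c)$. Neither is an O-stone, since $F\cap H=\varnothing$.

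\emph{Existence of $r$.} Because $w=0$, no O-stone lies in row $b$ or column $d$, so every row $s'\neq b$ is live; equivalently, \eqref{eq:live} gives $\ell\ge n-1\ge3$. Hence a live $r\neq u_a$ exists, and any such $r$ lies in $A$.

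\emph{The cross cell.} The cross cell is $(r,\sigma(s))=(r,v_c)$ with $r\in A$, $r\neq u_a$. It is not an X-stone, because X's only stone in column $v_c$ is in row $u_a\neq r$. It is not an O-stone, because $F\subseteq A\times B$ and $v_c\notin B$. So it is free at ply $2n-3$, and $(r,s)$ is a valid pair for plan~(i).

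I do not expect a real obstacle. The only delicate point is the bookkeeping identity $\sigma(u_a)=v_c$, which places the cross cell in column $v_c$. That column is free of O-stones precisely because of the ``perfect matching of $A\times B$'' conclusion of Lemma~\ref{lem:structF}. This is why the general counting of Lemma~\ref{lem:pair} is not needed here, and why choosing $s=u_a$ makes the cross cell automatically free.
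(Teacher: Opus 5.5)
Your proof is correct and follows the paper's argument essentially step for step. Every row other than $b$ is live because $w=0$, and the cross cell $(r,\sigma(u_a))=(r,v_c)$ is free because Lemma~\ref{lem:structF} places $F$ inside $A\times B$, while X's only stone in column $v_c$ is $(u_a,v_c)$ with $r\neq u_a$.
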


\begin{proof}
Here $w=0$, so $\ell=n-1$ and every row $\ne b$ is live, in particular $u_a$ (note $u_a\ne b=u_{3-a}$). Since $n-2\ge1$, some live $r\ne u_a$ exists. By Lemma~\ref{lem:structF}, $F$ is a perfect matching of $A\times B$. The cross cell is $(r,\sigma(u_a))=(r,v_c)$, and $r\in A$ because $r\notin\{u_a,u_{3-a}\}$. It is not owned by O, since $F\subseteq A\times B$ and $v_c\notin B$; it is not owned by X, since X's stones are $M_0\subseteq A\times B$ together with $(u_a,v_c)\ne(r,v_c)$. Therefore, it is free.
\end{proof}

\subsection*{Every cell that the plan needs stays free.}

\begin{lemma}[all cells X needs stay free]
\label{lem:free}
Assume $r\ne s$ are live and the relevant cross cell is free at ply $2n-3$,
and that O blocks at every forced ply. Then, in plan~(i), the cell
$(b,\sigma(r))$ is free at ply $2n-1$, the cell $(s,d)$ is free at ply
$2n+1$, and both $(b,\sigma(s))$ and $(r,\sigma(s))$ are free at ply
$2n+2$. The same holds for plan~(ii) under the mirror correspondence.
\end{lemma}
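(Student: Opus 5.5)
The plan is to track exactly which cells get occupied between ply $2n-3$ and ply $2n+2$, and to check that none of the four cells named in the statement is among them. Under the hypothesis that O blocks at every forced ply, O's moves at plies $2n-2$ and $2n$ are determined: by Corollary~\ref{cor:tempo} and Lemma~\ref{lem:rectangle}(a), they are $(b,d)$ and $(r,d)$ in plan~(i). X's moves at plies $2n-1$ and $2n+1$ are $(b,\sigma(r))$ and $(s,d)$. So only three things need checking. First, the four cells $(b,\sigma(r))$, $(s,d)$, $(b,\sigma(s))$, $(r,\sigma(s))$ are free at ply $2n-3$. Second, they are pairwise distinct. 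Third, each differs from every cell played at plies $2n-2$ through $2n+1$, apart from the cell X itself intends to play at that ply.

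For the first step, liveness of $r$ gives that $(b,\sigma(r))$ is free at ply $2n-3$, and liveness of $s$ gives that $(s,d)$ and $(b,\sigma(s))$ are free. The cross cell $(r,\sigma(s))$ is free by hypothesis. For the second and third steps, I will use only that $r\ne s$, that $r,s\ne b$, that $\sigma$ is a bijection onto $C\setminus\{d\}$ (so $\sigma(r)\ne\sigma(s)$ and neither equals $d$), and that rows $b,r,s$ are pairwise distinct. The checks are then:
\begin{itemize}
\item $(b,d)$ differs from $(b,\sigma(r))$, $(b,\sigma(s))$, $(s,d)$ and $(r,\sigma(s))$, by comparing the row or the column coordinate.
\item $(b,\sigma(r))$, which X occupies at ply $2n-1$, differs from $(s,d)$, from $(b,\sigma(s))$ (since $\sigma(r)\ne\sigma(s)$) and from $(r,\sigma(s))$ (since $r\ne b$).
\item $(r,d)$, which O occupies at ply $2n$, differs from $(s,d)$, from $(b,\sigma(s))$ and from $(r,\sigma(s))$ (since $\sigma(s)\ne d$).
\item $(s,d)$, which X occupies at ply $2n+1$, differs from $(b,\sigma(s))$ and from $(r,\sigma(s))$.
\end{itemize}
Together these show that each target cell is still free at the ply where it is needed. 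They also show that both final cells are free at ply $2n+2$, that is, just after X's move at ply $2n+1$ and before O responds. I read ``free at ply $2n+2$'' in this sense, since the double threat must be present when O moves.

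Plan~(ii) follows by the mirror correspondence, which swaps the roles of rows and columns through $\sigma$. O's forced blocks are $(b,d)$ and then $(b,\sigma(r))$, by Lemma~\ref{lem:rectangle}(b). X's cells are $(r,d)$, then $(b,\sigma(s))$, and the final threats are $(s,d)$ and $(s,\sigma(r))$. The same coordinate comparisons apply with the obvious substitutions, the cross cell now being $(s,\sigma(r))$.

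I see no real obstacle; the proof is bookkeeping. The one point needing care is to use the hypothesis that O blocks only to pin down O's two intermediate moves exactly. This requires the completing cell to be unique at each of those plies, which is Lemma~\ref{lem:rectangle}(a) together with $(b,d)$ already being O's. Without uniqueness, O might have a choice of block, and that choice could hit a cell X needs later.
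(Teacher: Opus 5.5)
Your proposal is correct and follows essentially the same route as the paper: fix the cells occupied along the forced line, namely $(b,d)$ and $(r,d)$ for O and $(b,\sigma(r))$ and $(s,d)$ for X, then check coordinate by coordinate that each needed cell avoids them, using $r\ne s$, $r,s\ne b$, injectivity of $\sigma$ and $\sigma(\cdot)\ne d$. You are slightly more explicit than the paper in two places: you derive freeness at ply $2n-3$ from liveness and the cross-cell hypothesis, and you note that uniqueness of the free completing cell is what pins down O's blocks.
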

\begin{proof}
Under the hypothesis, the cells occupied between plies $2n-3$ and $2n+3$
are exactly $(b,d)$ and $(r,d)$ by O and $(b,\sigma(r))$ and $(s,d)$ by X,
together with O's final block in $\{(b,\sigma(s)),(r,\sigma(s))\}$. It
therefore suffices to check that each cell X requires differs from all
cells occupied before the ply at which X needs it.
    \begin{itemize}
    \item $(b,\sigma(r))$ is free at ply $2n-1$, as it differs from $(b,d)$ because $\sigma(r)\neq d$;
    \item $(s,d)$ is free at ply $2n+1$, as it differs from $(b,d),(r,d)$ because $s\ne b,r$, and from $(b,\sigma(r))$ because $s\neq b$;
    \item $(b,\sigma(s)),(r,\sigma(s))$ are free at ply $2n+2$, as they differ from $(b,d),(s,d)$ because $\sigma(s)\ne d$ and $r\ne b$, from $(r,d)$ likewise, and from $(b,\sigma(r))$ because $\sigma$ is injective and $s\ne r$.
\end{itemize}
The mirror computation gives plan (ii). If instead O deviates at some ply, their single move occupies at most one of X's free completing cells, so Rule~\ref{rule:deviation} still applies. 
\end{proof}

\subsection*{O cannot win or create a threat.} The win only requires the following. 
\begin{lemma}[no defensive deviation]
\label{lem:nodeviation}
Assume X follows \S\ref{sec:strategy} and O has blocked at every forced ply strictly before the ply in question. Then:
\begin{itemize}[leftmargin=2em]
\item[\textbf{(a)}] O has no threat at plies $2n-1$, $2n+1$ (when X must be free to execute their plan);
\item[\textbf{(b)}] Whatever cell O plays at ply $2n-2$, $2n$, or $2n+2$, they do not complete a transversal.
\end{itemize}
\end{lemma}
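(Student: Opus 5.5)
The plan is to reduce everything to one counting statement about O's stone set at each relevant moment. Parts (a) and (b) are two sides of the same fact. O has a threat at ply $2n-1$ (resp.\ $2n+1$) exactly when some free cell $y$ would complete O's current set. O completes a transversal at ply $2n$ (resp.\ $2n+2$) exactly when the cell $y$ played there completes O's set as it stood after the previous O move. So it suffices to show the following: for each of O's sets $S_1=F\cup\{(b,d)\}$ (after ply $2n-2$) and $S_2=S_1\cup\{z\}$ (after ply $2n$), no \emph{free} cell $y$ has $\nu(S_i\cup\{y\})=n$. Here $z$ is O's forced block, $(r,d)$ in plan (i) and $(b,\sigma(r))$ in plan (ii). Ply $2n-2$ is trivial: O then owns $n-1$ stones, so Corollary~\ref{cor:tempo} rules out a win.

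For $S_1$, I would argue that a transversal inside $S_1\cup\{y\}$ uses all $n$ stones. Hence $S_1$ itself must be a matching of size $n-1$, which splits by the value of $w$.
\begin{itemize}[leftmargin=2em]
\item If $w\ge1$, some stone of $F$ shares row $b$ or column $d$ with $(b,d)$, so $S_1$ is not a matching.
\item If $w=0$ and $\nu(F)\le n-3$, then $F$ is not a matching.
\item If $w=0$ and $\nu(F)=n-2$, Lemma~\ref{lem:structF} makes $F$ a perfect matching of $A\times B$. Then $S_1$ is a matching missing exactly row $u_a$ and column $v_c$, and by Corollary~\ref{cor:tempo} its unique completing cell is $(u_a,v_c)$, which is X's last Phase-1 stone.
\end{itemize}
In every case no free cell completes $S_1$.

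For $S_2$, which has $n$ stones, a transversal in $S_2\cup\{y\}$ omits exactly one of its $n+1$ stones. The case split follows Rule~\ref{rule:plan}.
\begin{itemize}[leftmargin=2em]
\item If $F$ meets column $d$, plan (i) is in force. O then owns at least three cells of column $d$: that stone of $F$, $(b,d)$ and $(r,d)$. At least two must be discarded, leaving at most $n-1$.
\item If $F$ misses column $d$ but meets row $b$, plan (ii) is in force. The mirror argument applies with the three O-cells of row $b$: the stone of $F$, $(b,d)$ and $(b,\sigma(r))$.
\item If $w=0$, plan (i) is in force, and $(b,d),(r,d)$ share column $d$, so one of them is discarded. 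If $\nu(F)\le n-3$, the count is at most $(n-3)+1+1=n-1$.
\item If $w=0$ and $\nu(F)=n-2$, then $F$ is a perfect matching of $A\times B$. The stone $(r,d)$ with $r\in A$ clashes with $F$ in row $r$ and cannot be combined with all of $F$. Dropping a stone of $F$ instead loses coverage, so it too yields at most $n-1$. Hence the transversal must be $F\cup\{(b,d),y\}$ with $y=(u_a,v_c)$, which is X-owned.
\end{itemize}
This gives (a) at ply $2n+1$ and (b) at ply $2n+2$.

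The main obstacle is the last subcase, $w=0$ with $\nu(F)=n-2$. It is the only place where O genuinely holds an $(n-1)$-matching. The argument must use both that Lemma~\ref{lem:structF} confines $F$ to $A\times B$ and that the tie-break chose $s=u_a$. The latter guarantees that the lone completing cell $(u_a,v_c)$ is already X's rather than free. I would write this subcase carefully, enumerating which of O's stones can be dropped. All other cases are a one-line pigeonhole on a line containing too many O-stones.
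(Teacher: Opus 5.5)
Your proof is correct and is essentially the paper's argument. It uses the same case split dictated by Rule~\ref{rule:plan}, the same pigeonhole on column $d$ (resp.\ row $b$), and the same resolution of the subcase $w=0$, $\nu(F)=n-2$ via the unique completing cell $(u_a,v_c)$; your exact stone counts $|S_1|=n-1$, $|S_2|=n$ simply stand in for the paper's inequality $\nu(S)\le\nu(S\setminus L)+1$.

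One correction to your closing remark: $(u_a,v_c)$ is X's because it is X's move $n-1$, whatever $s$ is. What Rule~\ref{rule:plan} (not the tie-break) actually contributes in that subcase is $r\ne s=u_a$, hence $r\in A$, which your row-$r$ clash relies on without saying so.
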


\begin{remark}[why a stronger statement is false]
Take $n=4$, X $=\{(1,1),(2,2),(3,3)\}$ and $F=\{(1,2),(2,1)\}$ at ply $5=2n-3$; here $b=d=4$, $\sigma=\mathrm{id}$, $u_a=v_c=3$, $w=0$, and $F$ is a perfect matching of $A\times B$ with $A=B=\{1,2\}$. Plan (i) with $s=u_a=3$, $r=1$ gives the line O(4,4), X(4,1), O(1,4), X(3,4), O(1,3), X(4,3) and X wins at ply $11=2n+3$. But after O's last block, their set $\{(1,2),(2,1),(4,4),(1,4),(1,3)\}$ has maximum matchings $\{(1,2),(2,1),(4,4)\}$ and $\{(1,3),(2,1),(4,4)\}$, so $D_R^O=\{3\}$, $D_C^O=\{2,3\}$ and O \emph{does} threaten the free cell $(3,2)$ at ply $2n+3$---harmlessly, as X completes first. Therefore, no claim about O's threats after ply $2n+2$ can or needs to be made.
\end{remark}

\begin{proof}[Proof of Lemma~\ref{lem:nodeviation}]
We use two facts. A matching meets each line at most once, so
\begin{equation}
\label{eq:line}
\nu(S)\ \le\ \nu(S\setminus L)+1\qquad\text{for every line }L;
\end{equation}
and, separately, adding a single cell raises $\nu$ by at most $1$, so
\begin{equation}
\label{eq:add}
\nu(S\cup T)\ \le\ \nu(S)+|T| .
\end{equation}

At ply $2n-2$, O holds $n-1<n$ stones, so (b) holds there by Corollary~\ref{cor:tempo}, whatever they play. By Lemma~\ref{lem:tiebreak}, the tie-break guarantees $w\le n-3$, so Lemma~\ref{lem:pair} applies. The three exhaustive cases below match Rule~\ref{rule:plan}.

\medskip

\noindent\textbf{(C-i) $F$ meets column $d$.} X uses plan (i), so O's blocks at plies $2n-2,2n$ are $(b,d),(r,d)$, both in column $d$. Hence when X is to move at plies $2n-1$ and $2n+1$, O's set is contained in $F\cup\text{col }d$, and by~\eqref{eq:line}
\[
\nu(O)\ \le\ \nu(F\setminus\text{col }d)+1\ \le\ (n-3)+1\ =\ n-2 ,
\]
using $|F\setminus\text{col }d|\le n-3$. By the last clause of Lemma~\ref{lem:threat}, O has no threat at plies $2n-1$ and $2n+1$: that is (a). For (b): immediately before O's moves at plies $2n$ and $2n+2$, their set satisfies $\nu(O)\le n-2$, so by~\eqref{eq:add}, any single addition leaves $\nu(O)\le n-1<n$. Therefore, no move of theirs at those plies wins.

\medskip

\noindent\textbf{(C-ii) $F$ meets row $b$.} X uses plan (ii), so the blocks at plies $2n-2,2n$ are $(b,d),(b,\sigma(r))$, both in row $b$; the identical computation with $L=\text{row }b$ gives $\nu(O)\le n-2$ before each of X's moves at plies $2n-1,2n+1$, and (a),(b) follow exactly as in (C-i).

\medskip

\noindent\textbf{(C-iii) $w=0$.} By Lemma~\ref{lem:structF}, $x_{n-2}\notin H$ and $F\cap H=\varnothing$.

\medskip

\noindent\emph{Sub-case $\nu(F)\le n-3$.} X uses plan (i). Before X's moves at plies $2n-1$ and $2n+1$, O's set is $F$ plus at most two cells of column $d$, so by~\eqref{eq:line}, $\nu(O)\le\nu(F)+1\le n-2$. As in (C-i), this gives (a), and (b) via~\eqref{eq:add}.

\medskip

\noindent\emph{Sub-case $\nu(F)=n-2$.} By Lemma~\ref{lem:structF}, $F$ is a perfect matching of $A\times B$. Here the crude bound~\eqref{eq:add} fails---O's set already has $\nu=n-1$---and we instead argue with Lemma~\ref{lem:threat}.

\medskip

\noindent\emph{Ply $2n-1$ (and O's move at ply $2n$).} O's set is $F\cup\{(b,d)\}$, a matching of size $n-1$ and nothing else, missing row $u_a$ and column $v_c$. By Corollary~\ref{cor:tempo}, its unique completing cell is $(u_a,v_c)$, which is occupied by X's last Phase-1 stone. Thus O has no threat at ply $2n-1$, giving (a); since no free cell completes their set, no move at ply $2n$ can win, giving (b).

\medskip

\noindent\emph{Ply $2n+1$ (and O's move at ply $2n+2$).} X plays plan (i) with $s=u_a$ and live $r\ne u_a$ (Lemma~\ref{lem:pairw0}), so O's set is $F\cup\{(b,d),(r,d)\}$. Row $u_a$ has no O-stones, so $\nu(O)\le n-1$; the only maximum matching is $F\cup\{(b,d)\}$, because $(b,d)$ and $(r,d)$ share column $d$, and replacing $(b,d)$ by $(r,d)$ forces $F$'s stone in row $r$ out, losing a cell. Therefore, $D_R^O=\{u_a\}$, $D_C^O=\{v_c\}$ and the unique completing cell $(u_a,v_c)$ is again occupied: there is no threat at ply $2n+1$, and no move at ply $2n+2$ wins.
\end{proof}

\subsection*{X wins by ply $2n+3$.}
\begin{proof}[Proof of Theorem A for $n\ge4$]
By Lemma~\ref{lem:phase1}, Phase 1 is executable, and X is never obliged to respond during it. At ply $2n-3$, after the tie-break, X owns exactly an $(n-1)$-matching $M$ missing row $b$ and column $d$ with $(b,d)$ free, while O holds $n-2$ stones. By Corollary~\ref{cor:tempo}, X threatens $(b,d)$, and O has neither a threat nor a win, so O must play $(b,d)$ or lose at ply $2n-1$.

Lemma~\ref{lem:tiebreak} makes $w\le n-3$ available; Lemmas~\ref{lem:pair} and~\ref{lem:pairw0} then supply, for the plan selected by Rule~\ref{rule:plan}, live rows $r\ne s$ with a free cross cell, and Lemma~\ref{lem:free} shows that every cell X's plan requires is still free when they need it. Lemma~\ref{lem:nodeviation}(b) shows that no O move at plies $2n-2,2n,2n+2$ completes a transversal, and Lemma~\ref{lem:nodeviation}(a) shows that they have no threat when X moves at plies $2n-1$ and $2n+1$, so X is never obliged to answer. Consequently, O's only non-losing move at ply $2n-2$ is $(b,d)$, and at ply $2n$, the only non-losing move is the unique block named by the plan. At ply $2n+2$, O faces a double threat and has no non-losing move. Any deviation is disposed of immediately by Rule~\ref{rule:deviation}. By Lemma~\ref{lem:rectangle}(a),(c)---resp.\ (b),(d)---X's ply-$(2n+1)$ move creates a double threat on two free cells, and X completes a transversal at ply $2n+3$.
\end{proof}

\noindent\textbf{Where $n\ge4$ is needed.} The hypothesis is used in exactly two places:

\begin{enumerate}[leftmargin=2em]
\item \textbf{Lemma~\ref{lem:tiebreak}} needs $|F|=n-2\ge2$. At $n=3$, we have $|F|=1$ and the intersection argument yields no contradiction, so nothing forces $w\le n-3=0$.
\item \textbf{Lemma~\ref{lem:structF}} uses the step $w'=1\le n-3$, which needs $n\ge4$.
\end{enumerate}

By contrast, Lemma~\ref{lem:pair} survives at $n=3$ provided $w=0$ is available ($\ell(\ell-1)=2>1=n-2-w$), and Lemma~\ref{lem:pairw0} needs only $n-2\ge1$. Hence the counting fails at $n=3$ precisely at the tie-break, and by exactly one stone: O survives $n=3$ by one tempo.

%---------------------------------------------------------------
\section{Computational Verification}
\label{sec:verification}
%---------------------------------------------------------------

To corroborate the correctness of the strategy of \S\ref{sec:strategy}, we implemented it as stated, including every tie-break and pair-selection rule. The implementation performs an exhaustive game-tree search against every legal defense by O. Whenever O has several legal moves, all are
explored; whenever X has a choice allowed by the strategy (e.g.,
multiple admissible pairs $(r,s)$), one canonical choice is taken, as the
proof demonstrates that every admissible choice is winning. Table~\ref{tab:verification}
and the distributions of Appendix~\ref{app:distributions} report the
canonical-choice mode. Every terminal branch records the first ply on which X wins. In particular,
the search checks every possible deviation by O from the forced blocking line, independently verifying the role of Lemma~\ref{lem:nodeviation} in the proof.

\begin{table}[ht]
\centering
\begin{tabular}{rrrrr}
\toprule
$n$ & terminal lines & nodes explored & maximum win ply & runtime \\
\midrule
4 & 4\,875 & 6\,075 & 11 & 0.3 s\\
5 & 485\,760 & 550\,224 & 13 & 23.2 s\\
6 & 75\,799\,185 & 82\,103\,245 & 15 & 5\,725 s\\
\bottomrule
\end{tabular}
\caption{Summary of the exhaustive verification.}
\label{tab:verification}
\end{table}

For every $n=4,5,6$, every explored game ends with an X win by ply
$2n+3$, as predicted by Theorem~A.
Moreover, some branch reaches ply $2n+3$ in each case, so the upper bound is
attained for all three values. The search also records the proof case reached by each terminal line.
All three cases of \S\ref{sec:proof} occur in practice, including the exceptional sub-case
(C-iii) with $\nu(F)=n-2$, confirming that each branch of the analysis is necessary.

\medskip

The source code, machine-generated verification reports, Lean~4
formalization (Appendix~\ref{app:lean}), and scripts producing figures are available in a
public GitHub repository.\footnote{Repository:
\url{https://github.com/keverage-guan/transversal}.} Appendix~\ref{app:implementation} describes the representation, the matching computations, the search algorithm, and the list of invariants checked during the search. Appendix~\ref{app:distributions} details statistics from the lines explored during the search.

%---------------------------------------------------------------
\section{Discussion and Open Problems}
\label{sec:discussion}
%---------------------------------------------------------------

The present work does not address the broader extremal questions posed by
Ranđelović~\cite{randjelovic}, including the minimum size $f(n)$ of a
winning family of transversals, the threshold $g(n)$ above which every family
is winning, and related conjectures. A natural direction for future work is determining whether the bound achieved
by our construction is optimal. 
\begin{conjecture}
\label{conj:sharp}
For every $n\ge4$ and every strategy of X, O has a defense surviving to
ply $2n+3$; that is, $2n+3$ is the exact length of the game under optimal
play.
\end{conjecture}

Exhaustive search confirms that for $n=4,5,6$, the strategy of
\S\ref{sec:strategy} admits a legal O defense forcing the win to ply
$2n+3$ (Table~\ref{tab:winply}). This does not rule out a faster strategy
for X, and establishing the lower bound for general $n$ remains open.

\section*{Acknowledgements}
The author thanks Noga Alon for reviewing the paper and for his helpful feedback.

\bibliographystyle{plain}
\bibliography{transversal}
%---------------------------------------------------------------
\appendix

\section{Implementation Details}
\label{app:implementation}

To complement the proof, we performed an exhaustive computer verification of the
strategy described in Sections~\ref{sec:strategy}--\ref{sec:proof}. The verifier
checks the strategy against every legal defense by O and verifies the
structural claims used in the proof at every position encountered. The complete
source code, together with the scripts used to generate the computational
figures and the machine-generated verification reports, is available in the accompanying GitHub repository.

\subsection{Verification Procedure}

Positions are represented using row bitmasks for the X- and
O-stones. The verifier computes the maximum matching number $\nu(S)$
using an augmenting-path algorithm, with computed
values memoized across the search.

The search follows the strategy prescribed in \S\ref{sec:strategy} and
branches over every legal move by O. Whenever the proof permits
multiple admissible choices, the verifier can either select a fixed
representative or branch over all such choices. In particular, the latter mode
branches over every admissible tie-breaking move and every admissible pair
$(r,s)$. Since all first moves are equivalent under row and column
permutations, only one representative first move is considered.

Completing cells are computed using the characterization of
Lemma~\ref{lem:threat}: when $\nu(S)=n-1$, the verifier identifies the sets
$D_R$ and $D_C$ from a maximum matching and returns the completing rectangle
$D_R\times D_C$. An independent routine that tests each free cell directly is
also included as a consistency check.

At every position, the verifier checks the structural assertions required by
the proof. These include the Phase~1 invariant
$H=U_R\times U_C$
being free of O-stones, the existence and validity of the strategy
context after the tie-breaking move, the bound $w\le n-3$, the prescribed
numbers of completing cells after the planned moves, and the absence of an
O-winning move. Any failed assertion produces an explicit
counterexample. The search terminates only when an X-transversal has
been obtained.

Thus the computation verifies not only that the strategy wins, but also that
all intermediate invariants on which the proof relies hold throughout the
entire search.

\subsection{Reproducibility}

The verification has been run exhaustively for $n=4,5,6$. The accompanying
source code reproduces these computations via

\begin{verbatim}
python3 transversal_verify.py --n 4
python3 transversal_verify.py --n 5
python3 transversal_verify.py --n 6
\end{verbatim}

The verification reports record the number of explored game histories and
search nodes, the winning plies, and summary statistics for the strategy
cases. The complete reports are included in the accompanying GitHub repository.

%---------------------------------------------------------------
\section{Exhaustive Search Statistics}
\label{app:distributions}
%---------------------------------------------------------------

This appendix records the distributions extracted from the machine-generated
verification reports for $n=4,5,6$. All entries are proportions of the
terminal lines of the search (respectively $4\,875$, $485\,760$ and
$75\,799\,185$ lines); figures are
rounded to four decimals. A \textit{terminal line} is one complete play of the game
in which O follows an arbitrary legal defense and X follows the strategy of
\S\ref{sec:strategy}, with the canonical choice taken at every point where the
strategy admits several moves. Every terminal line ends in an X win, and only three winning
plies occur: $2n-1$, $2n+1$ and $2n+3$.

\begin{table}[ht]
\centering
\begin{tabular}{l|rrr}
\toprule
& \multicolumn{3}{c}{win ply} \\
\cmidrule(lr){2-4}
$n$ & $2n-1$ & $2n+1$ & $2n+3$ \\
\midrule
4 & 0.4000 & 0.3200 & 0.2800 \\
5 & 0.3696 & 0.3261 & 0.3043 \\
6 & 0.3562 & 0.3288 & 0.3151 \\
\bottomrule
\end{tabular}
\caption{Distribution of the winning ply, as a proportion of terminal
lines. In absolute terms, the columns are plies $7,9,11$ for $n=4$;
$9,11,13$ for $n=5$; and $11,13,15$ for $n=6$.}
\label{tab:winply}
\end{table}

\medskip

Table~\ref{tab:cases} gives the proportion of
terminal lines reaching each of the cases of the proof of
Lemma~\ref{lem:nodeviation}. The exceptional sub-case (C-iii) with
$\nu(F)=n-2$, in which the bound~\eqref{eq:add} fails and
Lemma~\ref{lem:threat} must be invoked instead, is rare and becomes rarer as
$n$ grows.

\begin{table}[ht]
\centering
\begin{tabular}{l|rrrr}
\toprule
& \multicolumn{4}{c}{case of Lemma~\ref{lem:nodeviation}} \\
\cmidrule(lr){2-5}
$n$ & (C-i) & (C-ii) & (C-iii), $\nu(F)\le n-3$ & (C-iii), $\nu(F)=n-2$ \\
\midrule
4 & 0.5538 & 0.3692 & 0.0667 & 0.0103 \\
5 & 0.7145 & 0.2562 & 0.0270 & 0.0023 \\
6 & 0.7498 & 0.2188 & 0.0309 & 0.0005 \\
\bottomrule
\end{tabular}
\caption{Distribution of the proof case reached, as a proportion of all
terminal lines.}
\label{tab:cases}
\end{table}

\medskip

Table~\ref{tab:planw} collects
the distribution of the plan chosen by Rule~\ref{rule:plan} and of the
parameter $w=|F\cap(\text{row }b\cup\text{col }d)|$ on which the case split
turns. Plan~(i) is used on the lines falling in cases (C-i) and
(C-iii), and plan~(ii) on those in case (C-ii); plan~(i) is the more common choice at every board size, and increasingly so as $n$ grows. Entries left blank denote values of
$w$ that are impossible at that board size, excluded by
Lemma~\ref{lem:tiebreak}. The two blocks are normalized separately.

\begin{table}[ht]
\centering
\begin{tabular}{l|rr|rrrr}
\toprule
& \multicolumn{2}{c}{plan} & \multicolumn{4}{c}{$w$} \\
\cmidrule(lr){2-3}\cmidrule(lr){4-7}
$n$ & (i) & (ii) & $0$ & $1$ & $2$ & $3$ \\
\midrule
4 & 0.6308 & 0.3692 & 0.0769 & 0.9231 & --- & --- \\
5 & 0.7438 & 0.2562 & 0.0293 & 0.6616 & 0.3091 & --- \\
6 & 0.7812 & 0.2188 & 0.0314 & 0.5938 & 0.3125 & 0.0623 \\
\bottomrule
\end{tabular}
\caption{Distribution of the plan selected by Rule~\ref{rule:plan} and of the
parameter $w$, each as a proportion of all terminal lines.}
\label{tab:planw}
\end{table}
\section{Formal Verification in Lean}
\label{app:lean}

As an independent check of the mathematical argument, we also formalized the main result in Lean~4. The formalization was generated with the assistance of Aristotle \cite{achim2025aristotle} and subsequently checked by the Lean kernel. The formalization encodes the transversal achievement game, the relevant notions of matchings and transversals, and the strategy used in the proof. The principal theorem corresponding to Theorem~A is stated and proved within the Lean development, along with the correctness of the $n=3$ draw and each of the lemmas used throughout the proof. The complete Lean source, together with the version information and instructions required to reproduce the verification, is included in the accompanying GitHub repository.

\section{Example Playthrough}
\label{app:playthrough}
The figures below show a complete playthrough of the strategy described in \S\ref{sec:strategy} on a $6 \times 6$ board. Each panel shows the position after the indicated ply, with the open block $H$ shaded gray. X wins on ply $15=2n+3$.

\begin{figure}[htbp]
\centering
\begin{subfigure}[t]{0.30\textwidth}
\centering
\includegraphics[width=\textwidth]{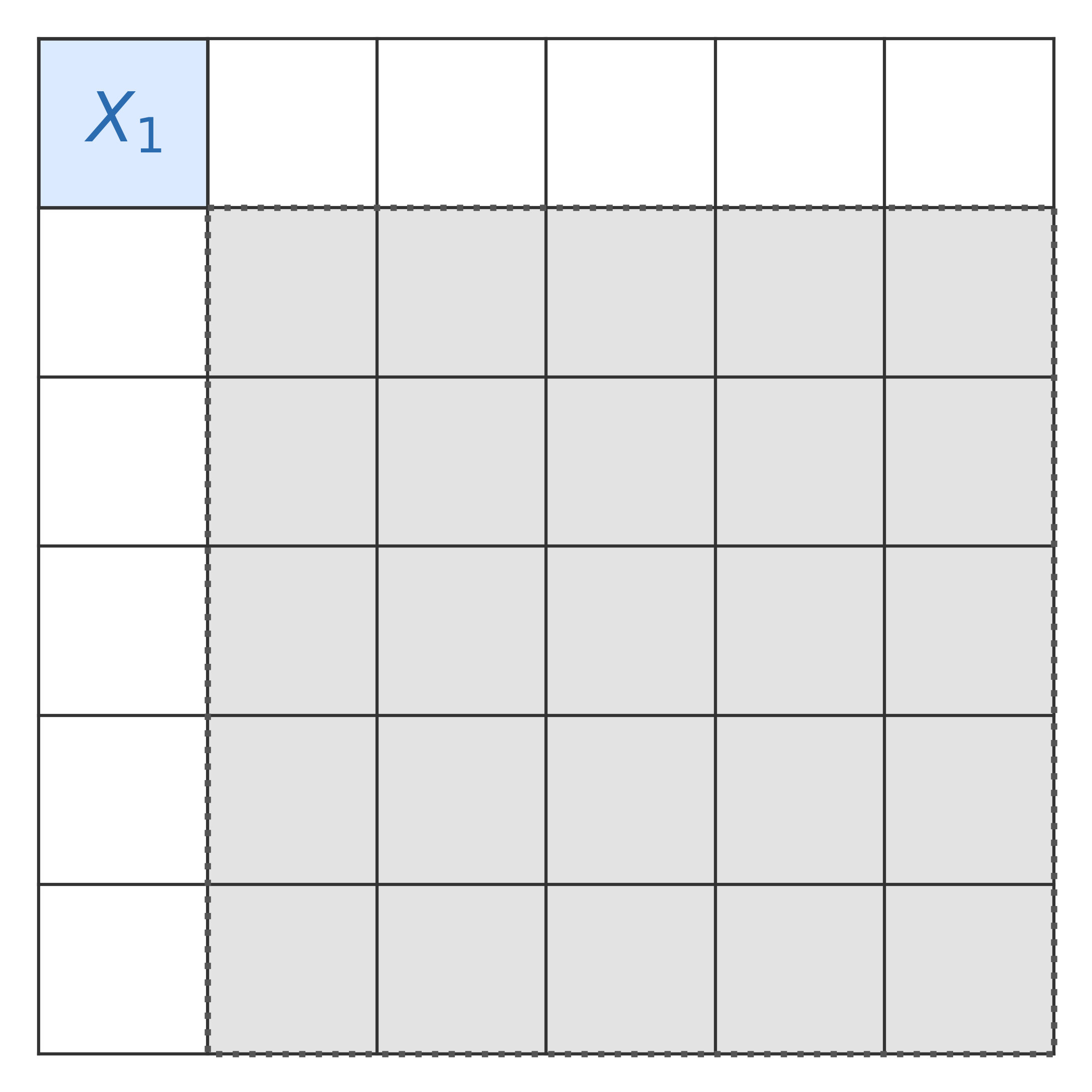}
\caption*{Ply 1: X plays arbitrary first move}
\end{subfigure}
\hfill
\begin{subfigure}[t]{0.30\textwidth}
\centering
\includegraphics[width=\textwidth]{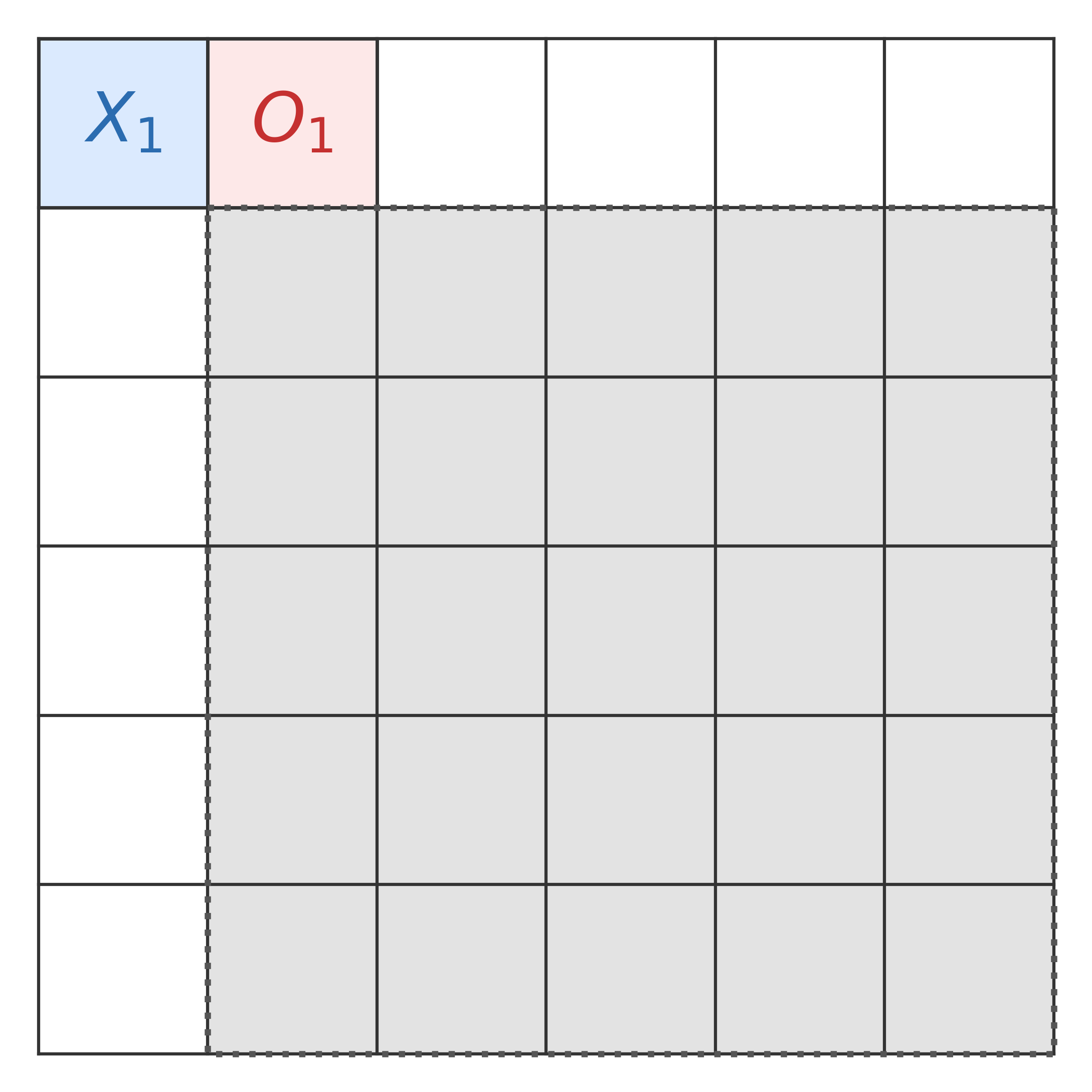}
\caption*{Ply 2: O plays outside $H$}
\end{subfigure}
\hfill
\begin{subfigure}[t]{0.30\textwidth}
\centering
\includegraphics[width=\textwidth]{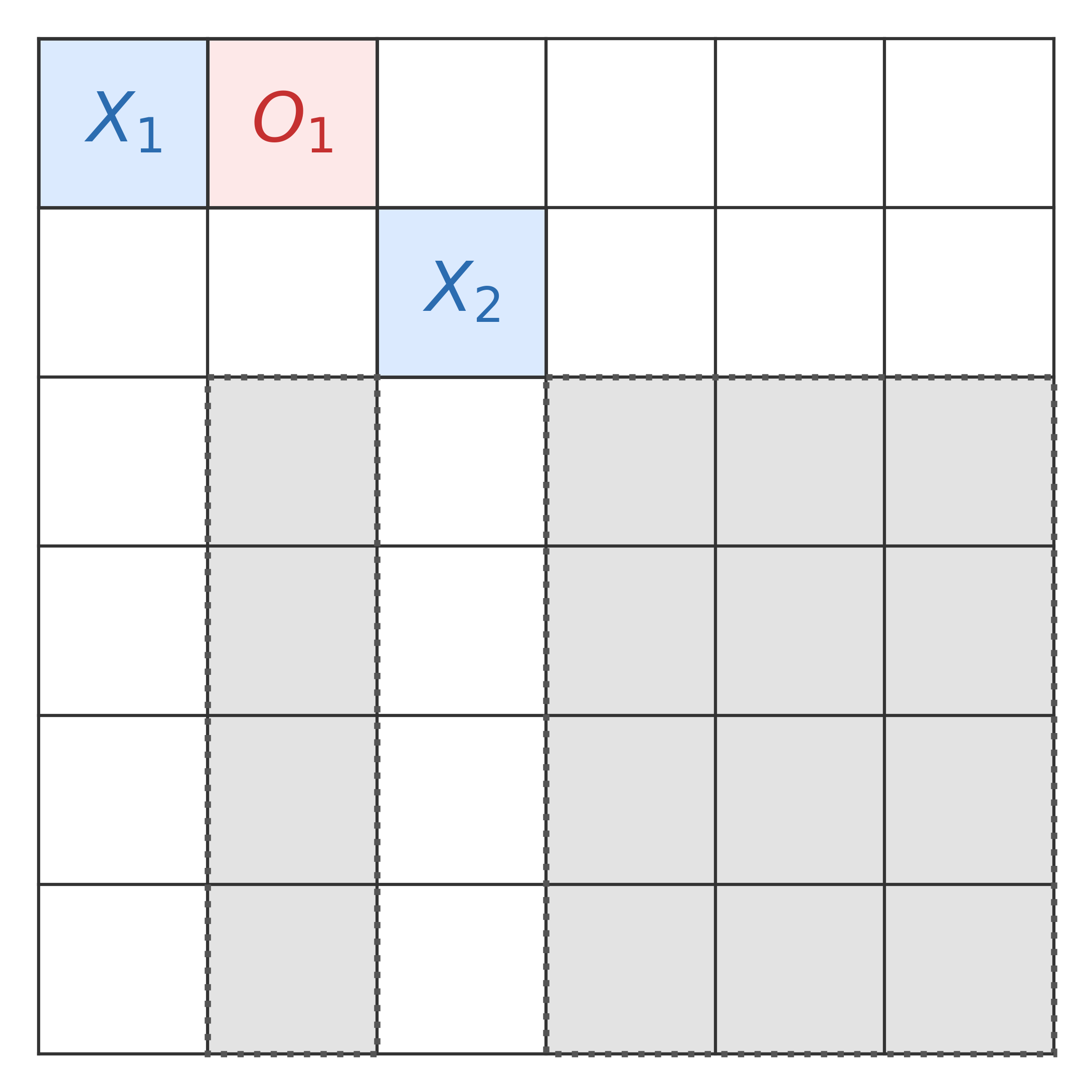}
\caption*{Ply 3: X plays arbitrary move inside $H$}
\end{subfigure}
\end{figure}

\begin{figure}[htbp]
\centering
\begin{subfigure}[t]{0.30\textwidth}
\centering
\includegraphics[width=\textwidth]{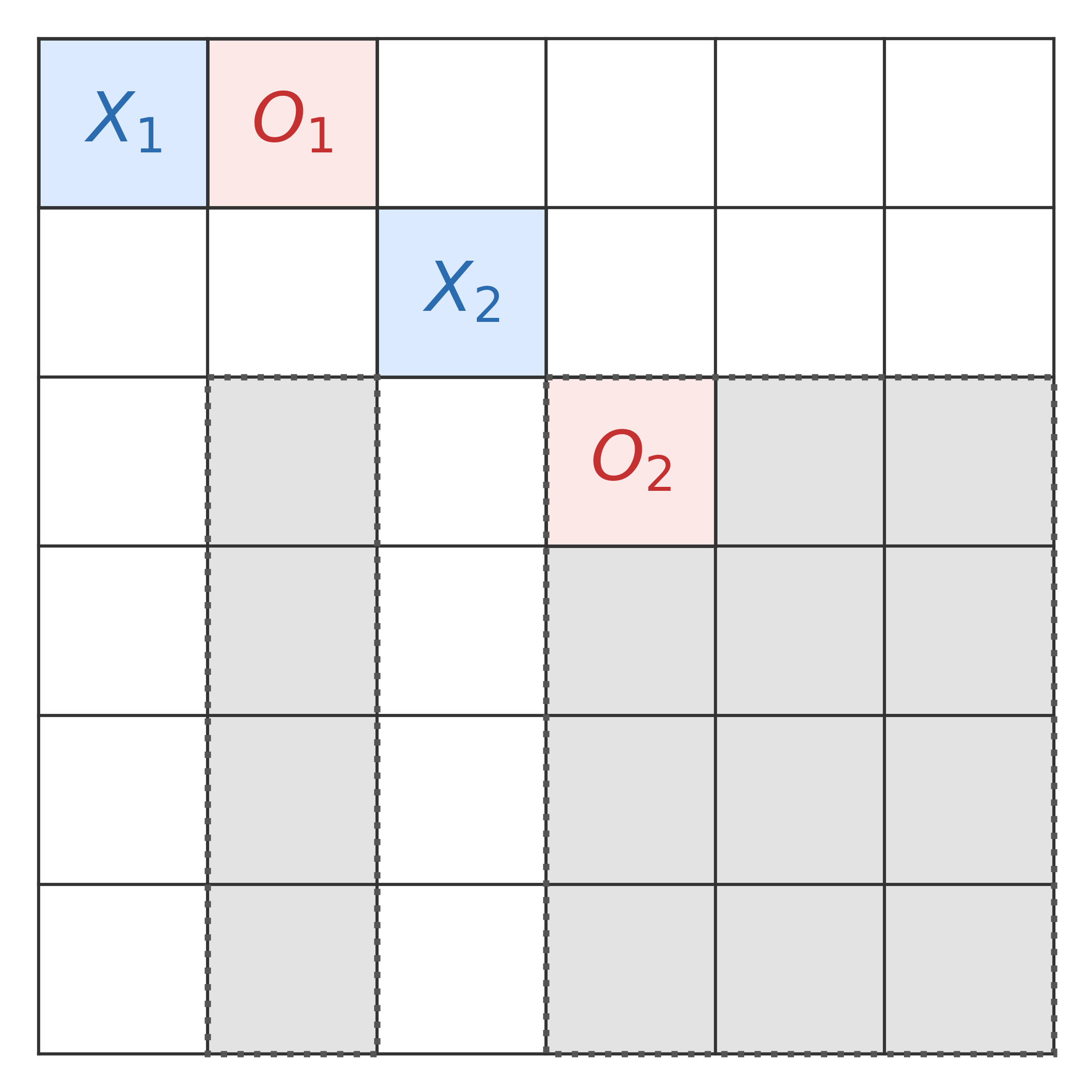}
\caption*{Ply 4: O plays inside $H$}
\end{subfigure}
\hfill
\begin{subfigure}[t]{0.30\textwidth}
\centering
\includegraphics[width=\textwidth]{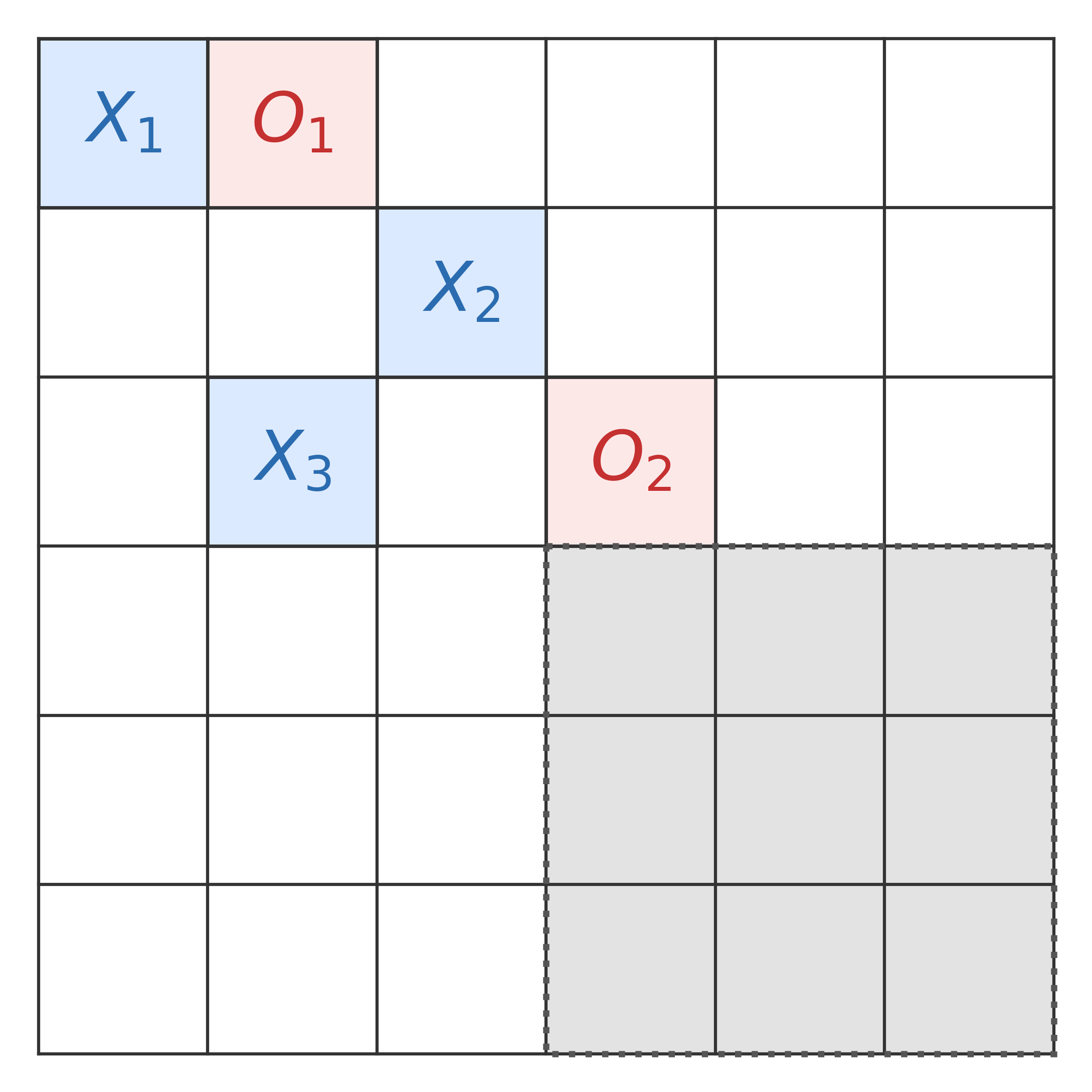}
\caption*{Ply 5: X plays in same row as O, restoring invariant}
\end{subfigure}
\hfill
\begin{subfigure}[t]{0.30\textwidth}
\centering
\includegraphics[width=\textwidth]{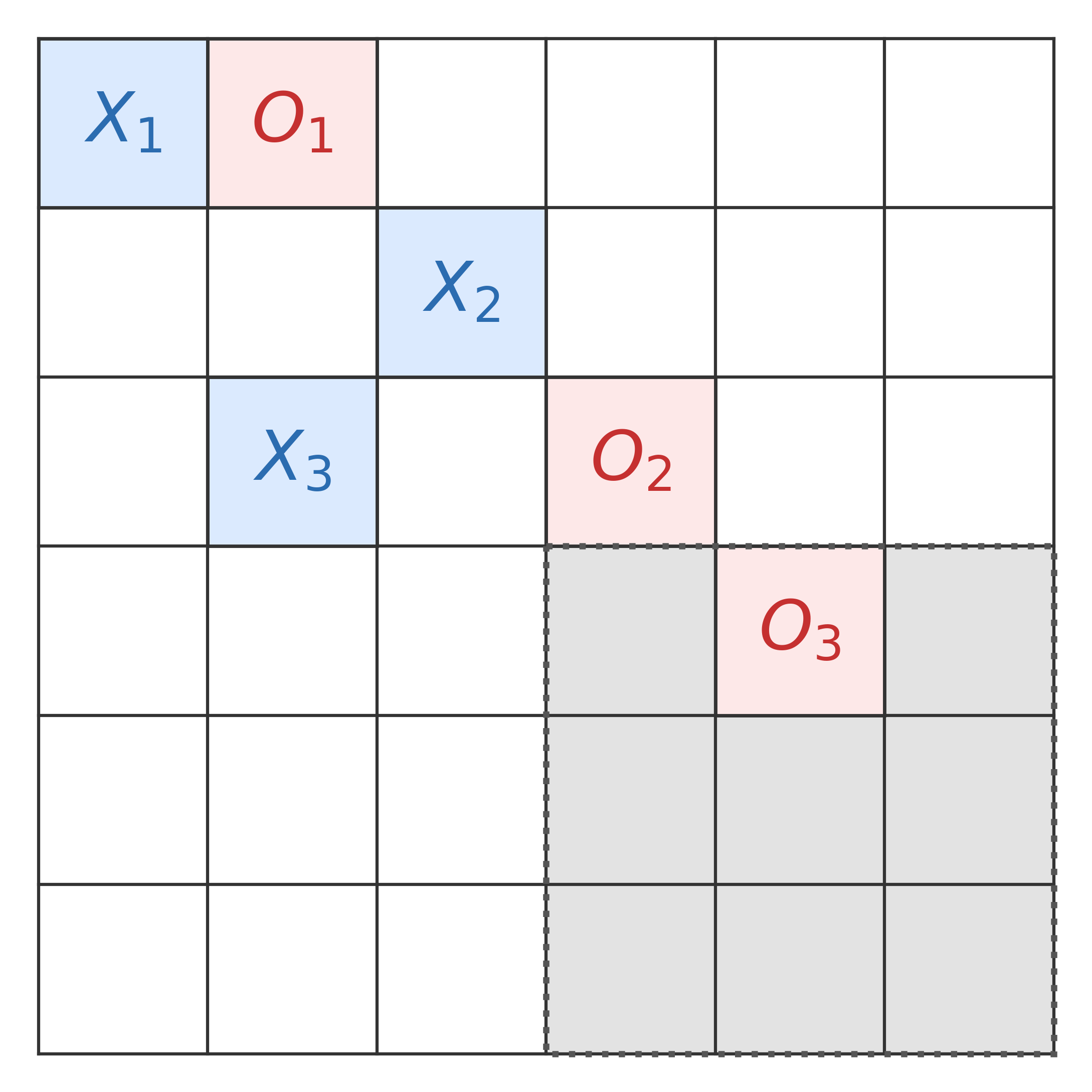}
\caption*{Ply 6: O plays inside $H$}
\end{subfigure}
\end{figure}

\begin{figure}[htbp]
\centering
\begin{subfigure}[t]{0.30\textwidth}
\centering
\includegraphics[width=\textwidth]{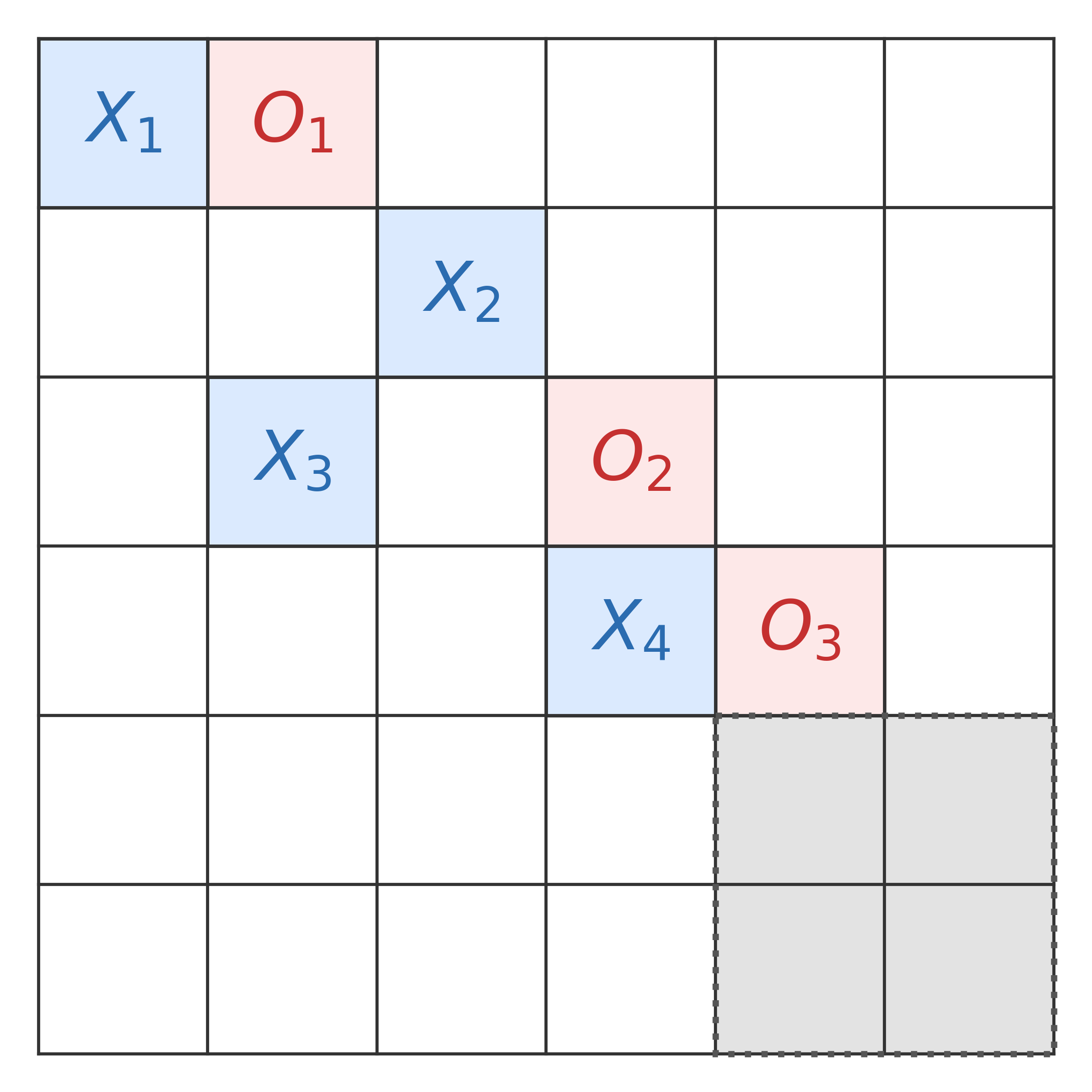}
\caption*{Ply 7: X plays in same row}
\end{subfigure}
\hfill
\begin{subfigure}[t]{0.30\textwidth}
\centering
\includegraphics[width=\textwidth]{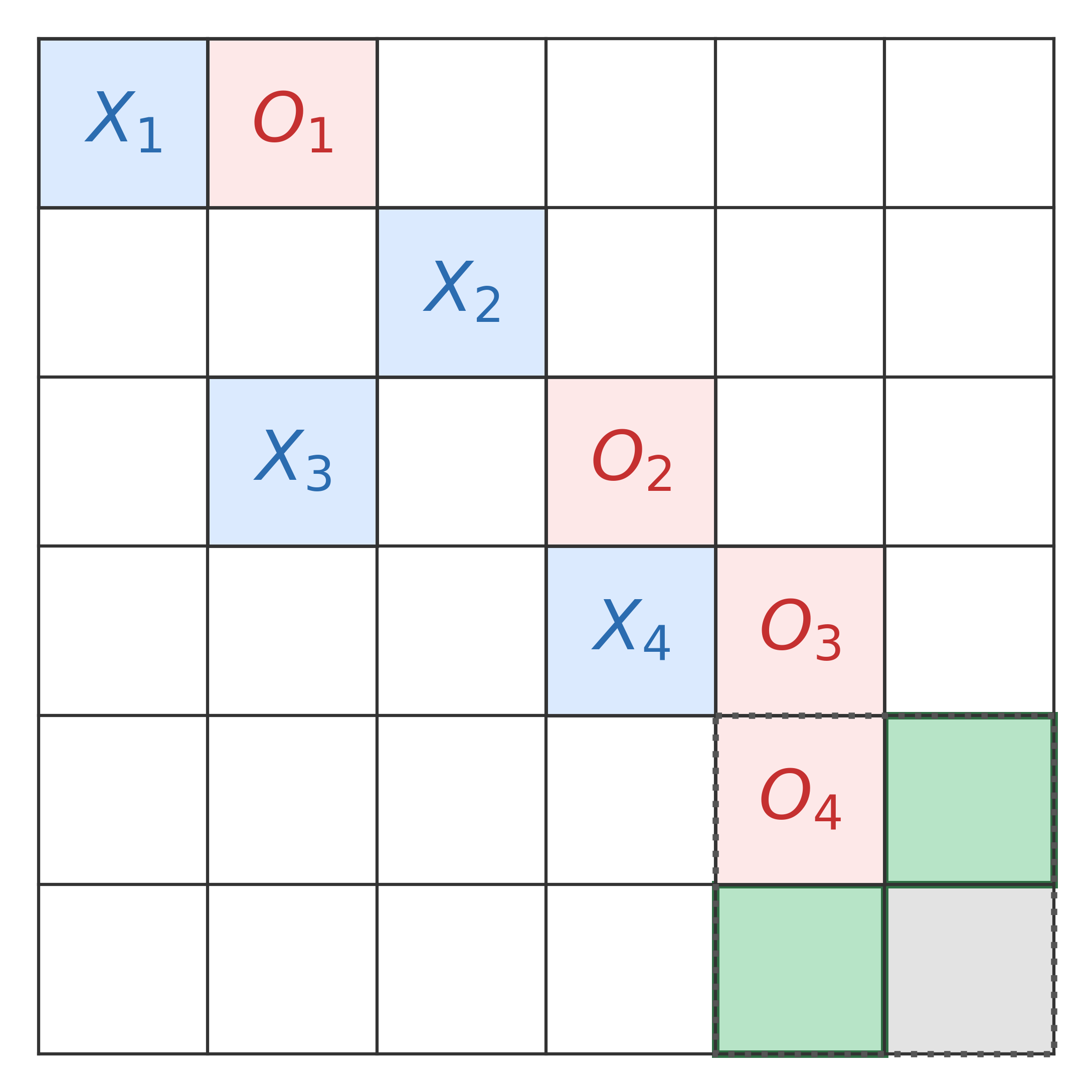}
\caption*{Ply 8: O plays inside $H$, leaving two admissible cells}
\end{subfigure}
\hfill
\begin{subfigure}[t]{0.30\textwidth}
\centering
\includegraphics[width=\textwidth]{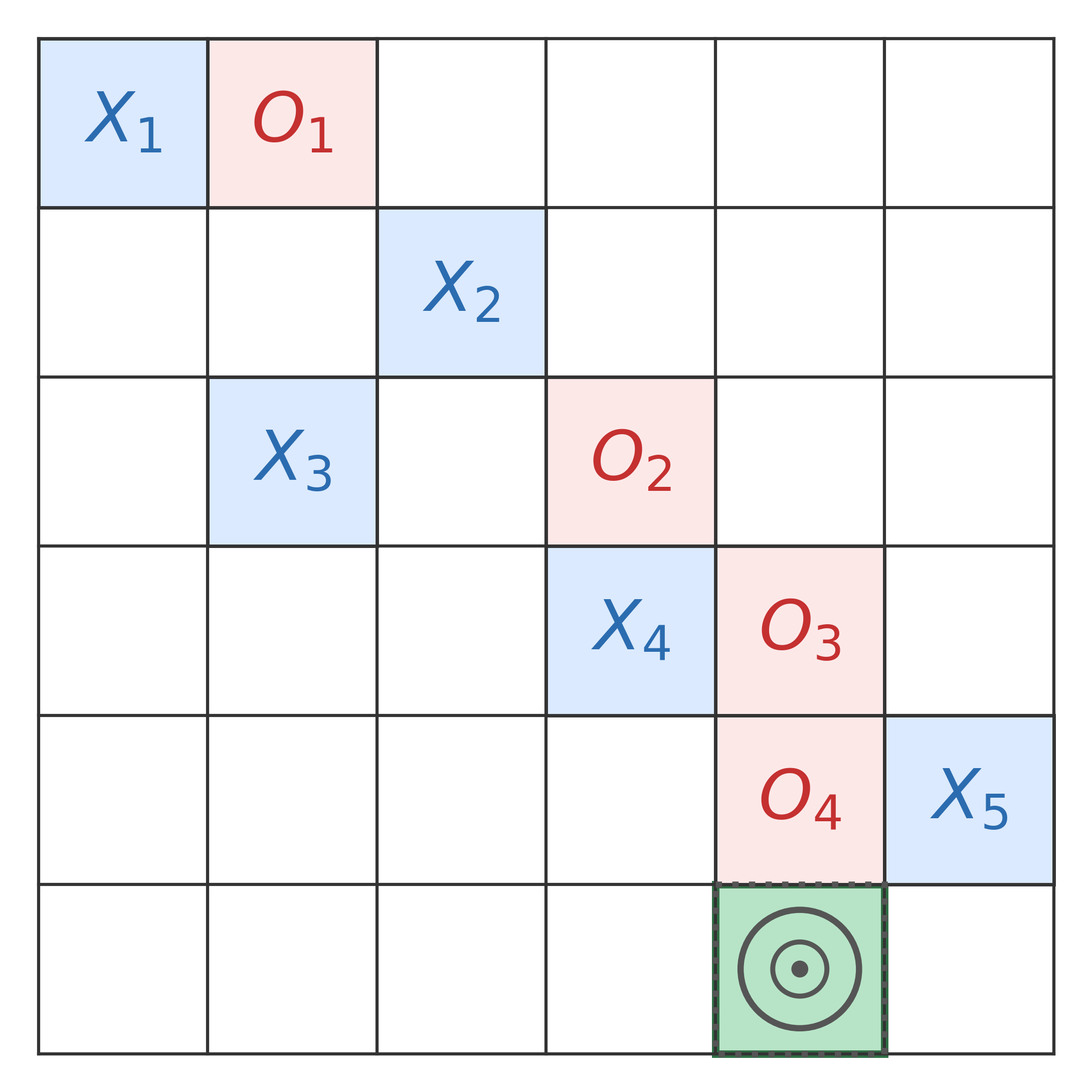}
\caption*{Ply 9: X plays admissible cell $(5,6)$ and threatens cell $(b,d)=(6,5)$; $F$ intersects column $d$}
\end{subfigure}
\end{figure}

\begin{figure}[htbp]
\centering
\begin{subfigure}[t]{0.30\textwidth}
\centering
\includegraphics[width=\textwidth]{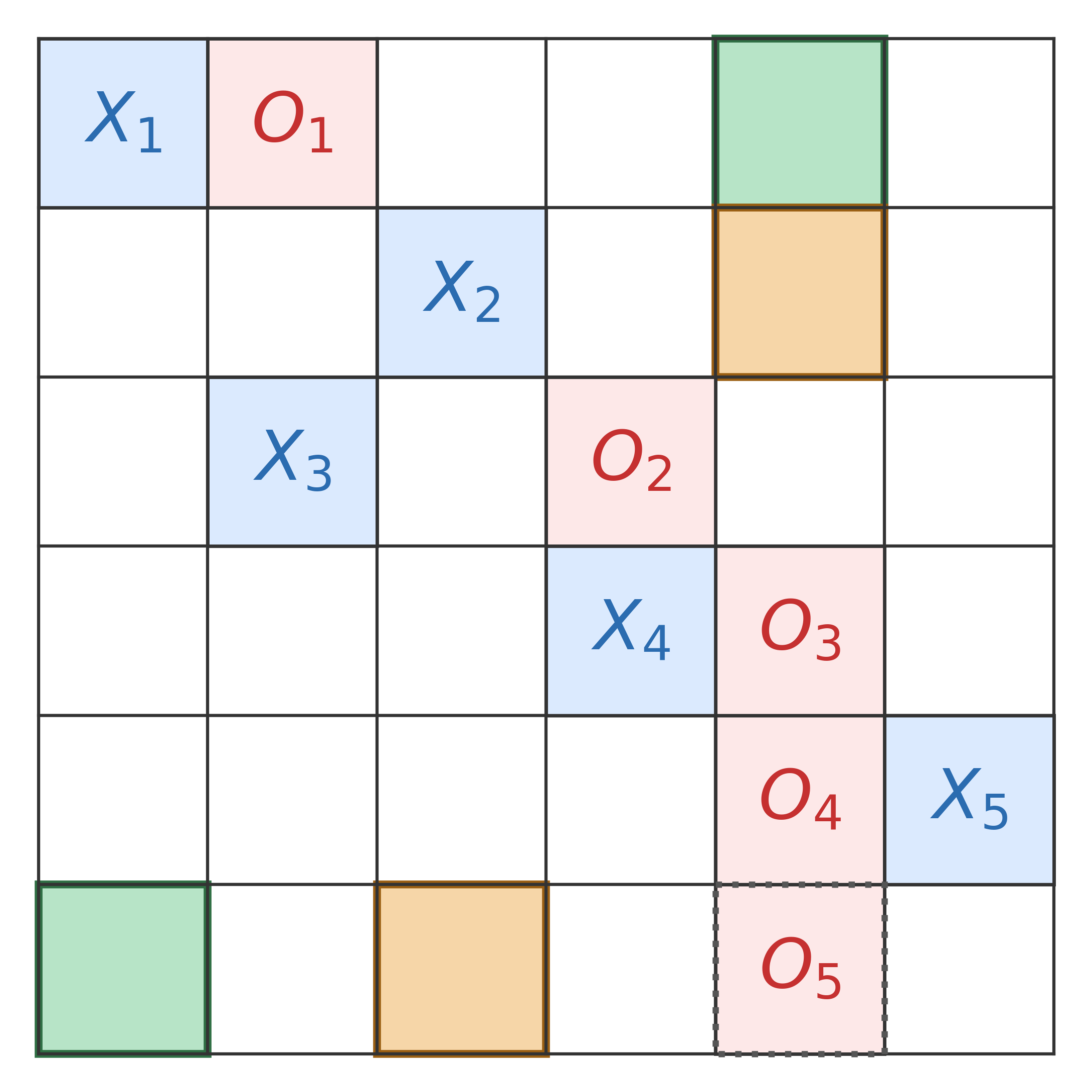}
\caption*{Ply 10: O blocks $(b,d)$; rows $r=1$, $s=2$ are live}
\end{subfigure}
\hfill
\begin{subfigure}[t]{0.30\textwidth}
\centering
\includegraphics[width=\textwidth]{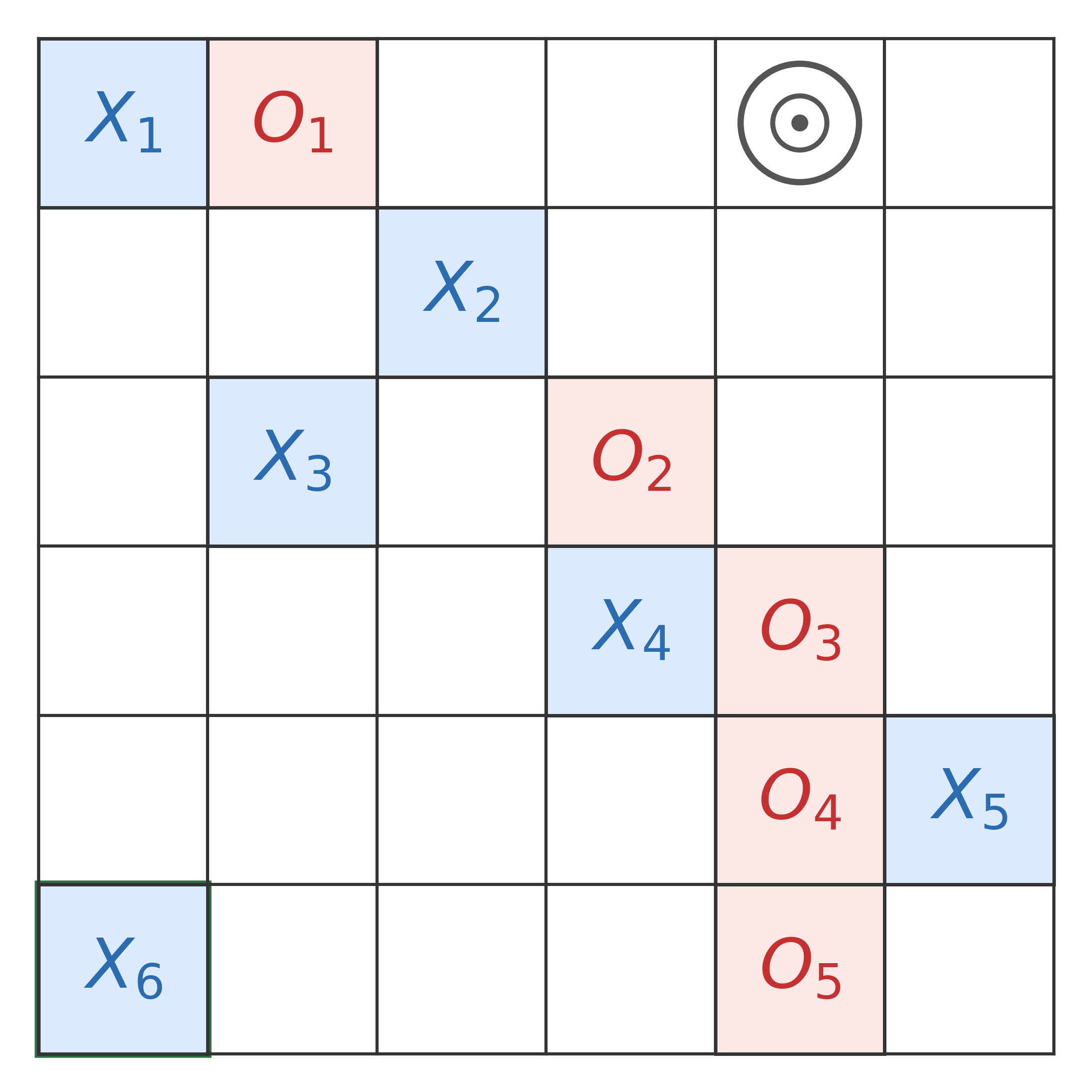}
\caption*{Ply 11: X follows plan (i) and plays $(b,\sigma(r))=(6,1)$, threatening $(r,d)=(1,5)$}
\end{subfigure}
\hfill
\begin{subfigure}[t]{0.30\textwidth}
\centering
\includegraphics[width=\textwidth]{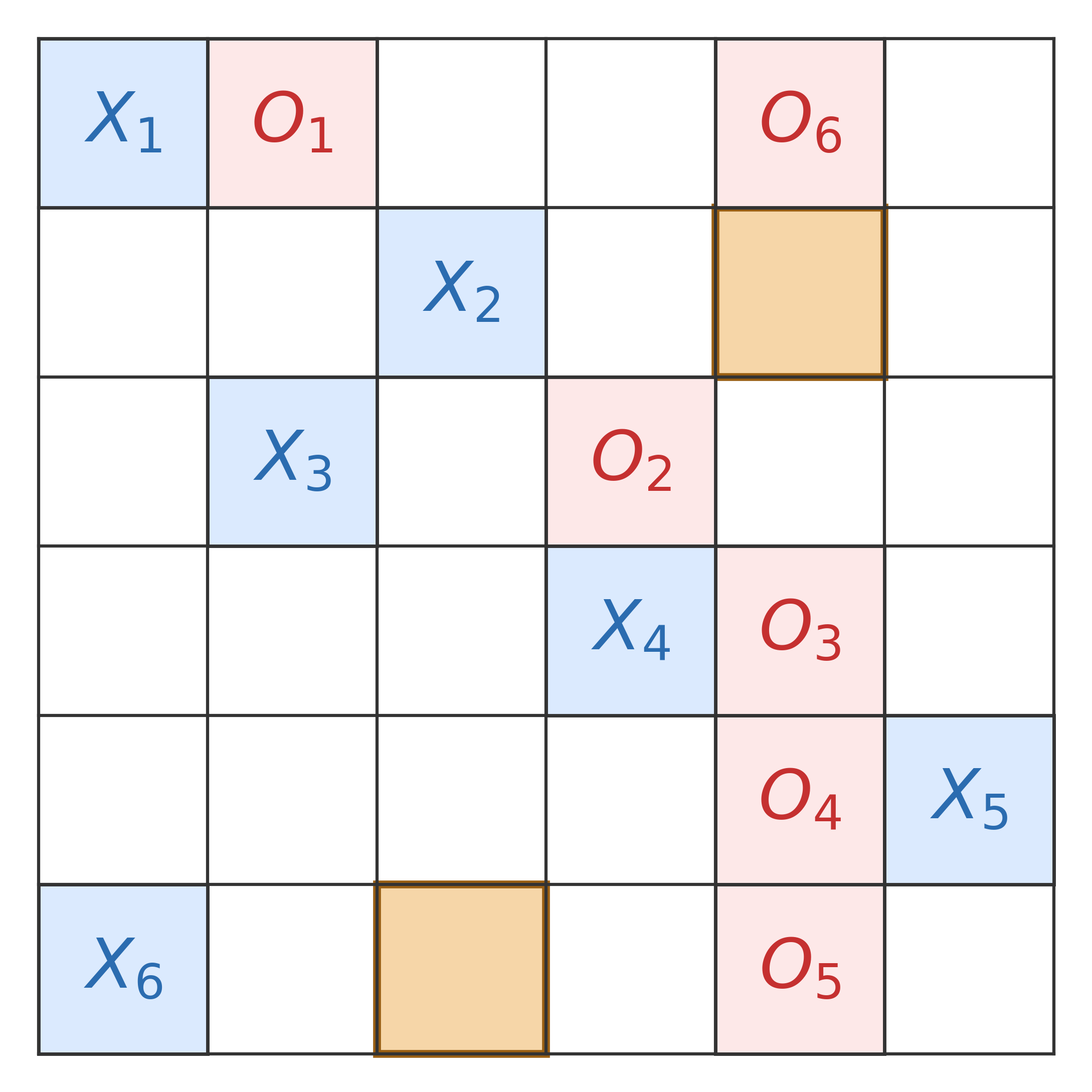}
\caption*{Ply 12: O blocks $(r,d)$}
\end{subfigure}
\end{figure}

\begin{figure}[htbp]
\centering
\begin{subfigure}[t]{0.30\textwidth}
\centering
\includegraphics[width=\textwidth]{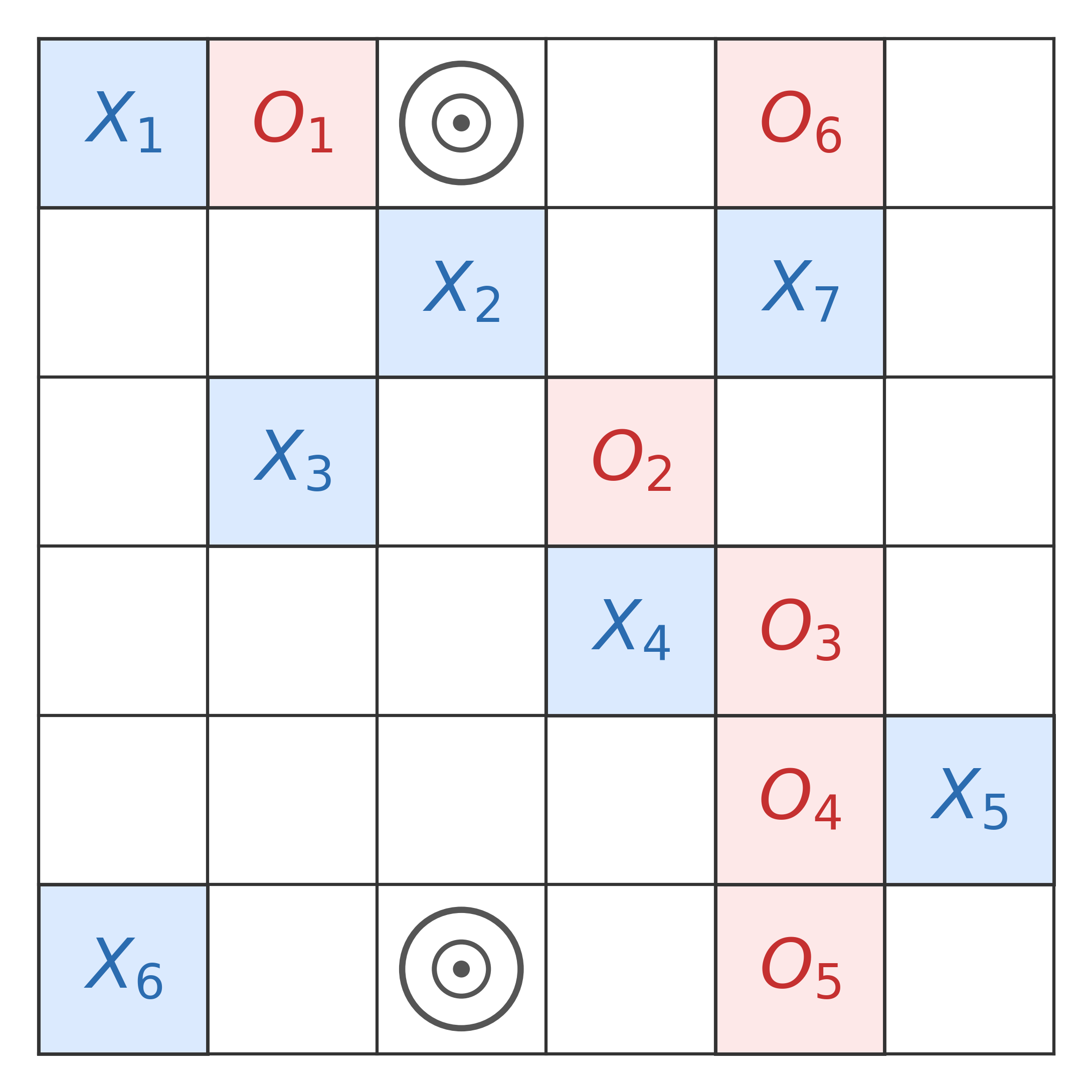}
\caption*{Ply 13: X plays $(s,d)=(2,5)$, creating double threat on $(b,\sigma(s))=(6,3)$ and $(r,\sigma(s))=(1,3)$}
\end{subfigure}
\hfill
\begin{subfigure}[t]{0.30\textwidth}
\centering
\includegraphics[width=\textwidth]{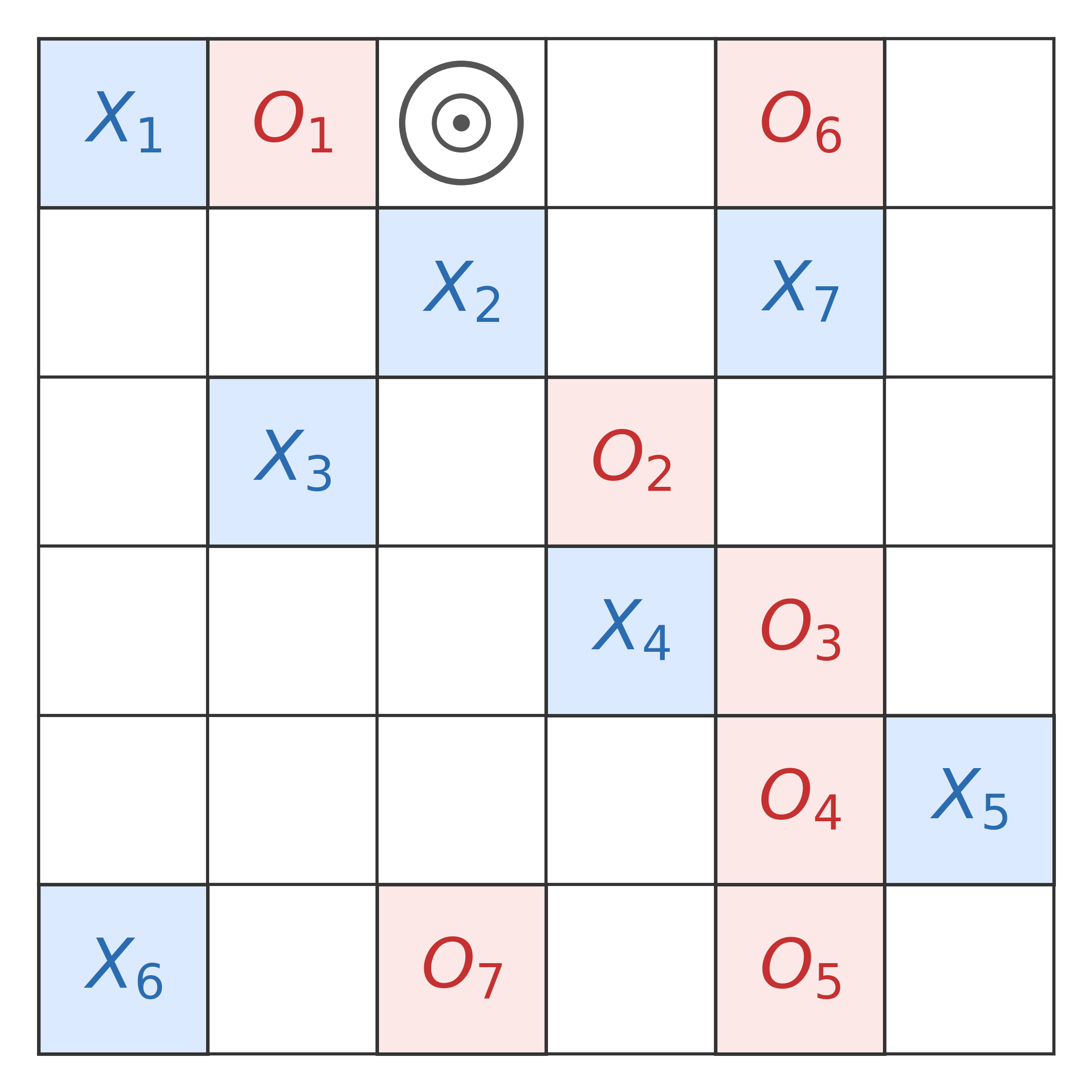}
\caption*{Ply 14: O blocks $(b,\sigma(s))$}
\end{subfigure}
\hfill
\begin{subfigure}[t]{0.30\textwidth}
\centering
\includegraphics[width=\textwidth]{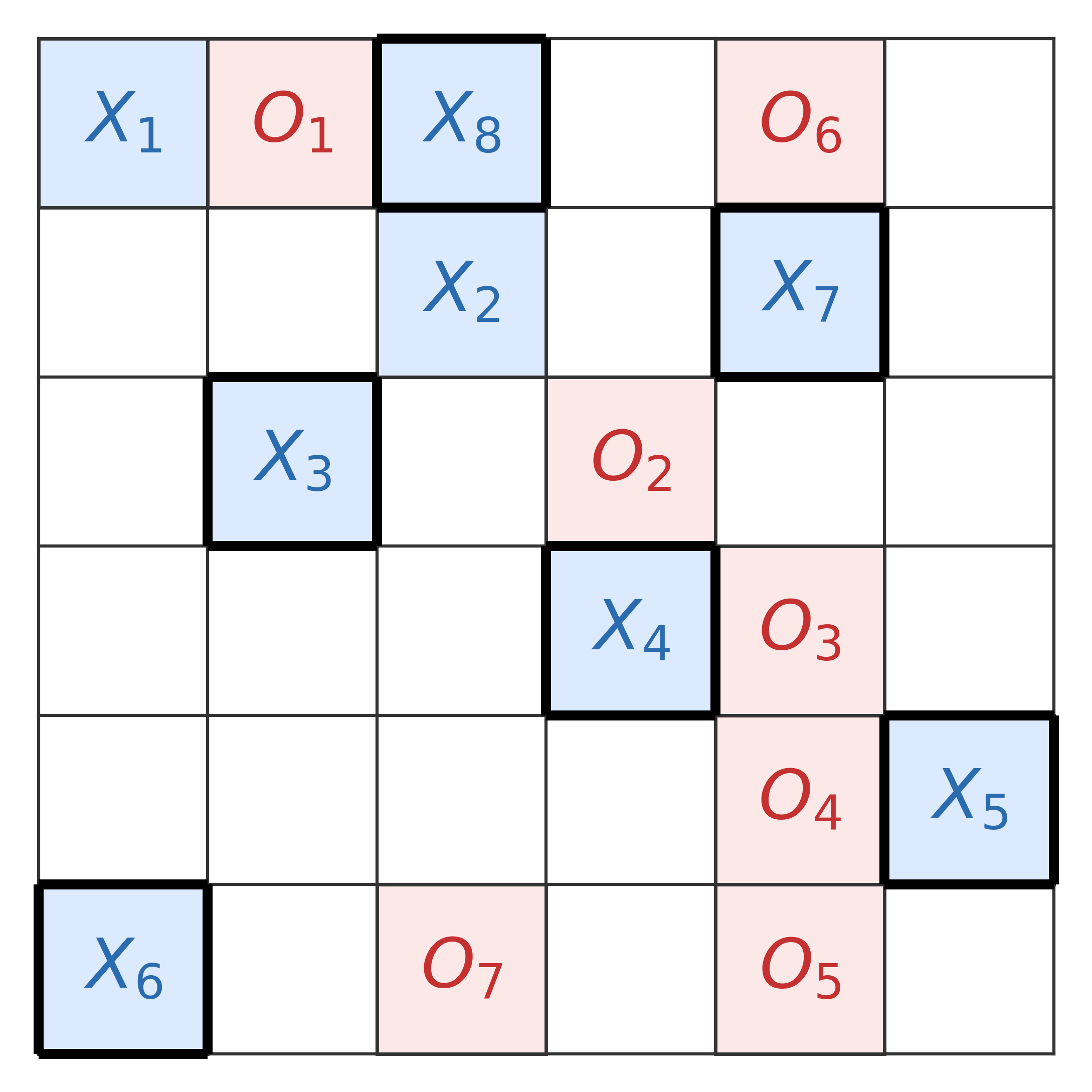}
\caption*{Ply 15: X plays $(r,\sigma(s))$ and completes transversal, winning the game}
\end{subfigure}
\end{figure}
\end{document}